\documentclass[11pt]{article}

\usepackage[a4paper,margin=1in]{geometry}
\usepackage{amsmath,amssymb,amsthm,mathtools}

\usepackage{tikz}

\usepackage{enumitem}
\usepackage{hyperref}
\usepackage{xcolor}

\hypersetup{
  colorlinks=true,
  linkcolor=blue!60!black,
  citecolor=blue!60!black,
  urlcolor=blue!60!black
}

\newcommand{\keywords}[1]{%
  \par\smallskip
  \noindent\textbf{Keywords: }#1
}

\newtheorem*{ThmA}{Theorem A}
\newtheorem*{ThmB}{Theorem B}

\newtheorem{theorem}{Theorem}[section]
\newtheorem{proposition}[theorem]{Proposition}
\newtheorem{lemma}[theorem]{Lemma}
\newtheorem{corollary}[theorem]{Corollary}
\theoremstyle{definition}

\newcommand{\Lip}{\text{Lip}}

\newcommand{\R}{\mathbb{R}}

\newcommand{\F}{\mathcal{F}}

\newcommand{\diam}{\operatorname{diam}}

\renewcommand{\Cap}{\mathrm{Cap}}
\newcommand{\Qap}{\mathrm{Qap}}
\newcommand{\wt}{\mathrm{w}}

\title{Dyadic potential theory and de Rham functions}

\author{Nicola Arcozzi}
\date{June 2026}

\begin{document}

\maketitle

\begin{abstract}
    We study de Rham functional equations driven by two increasing fractional
linear transformations.
Our main purpose is to relate the singularity theory of the associated
solutions to dyadic potential theory on the binary tree.  We first prove
an existence and uniqueness theorem for increasing, left-continuous
solutions in the full range of linear fractional data, and identify the
trapping region in parameter space where the solution is continuous.

For a large class of parameters we show that the de Rham solution is the
normalized cumulative capacitary function of a multiplicative dyadic
capacity.  This gives a potential-theoretic model for Möbius de Rham
systems.  We then sharpen Okamura's Hausdorff-dimensional estimates
for the singular measure associated with the solution by replacing
Hausdorff dimension with dyadic Riesz capacities at the upper endpoint of Okamura's theorem. 
\end{abstract}
\keywords{de Rham functions, dyadic capacity,  Möbius transformations,
 potential theory on trees}

\medskip

\noindent{\textbf{2020 Mathematics Subject Classification:} Primary 39B12; secondary 28A80, 31C20, 37E05.}

\section{Introduction}\label{SectIntroduction}
In \cite{Okamura2014}, Okamura considered a de Rham system having as data two fractional linear transformations.
For $j=0,1$, let $g_j(x)=\frac{x+j}{2}$, $g_j:[0,1]\to[0,1]$.
Suppose $[0,1]\xrightarrow{F_0,F_1}[0,1]$ are fractional transformations ,
\[F_j(x)=\frac{a_jx+b_j}{c_jx+d_j}=\Phi(A_j;x),\text{ where }A_j=\begin{pmatrix}
    a_j&b_j\\
    c_j&d_j
\end{pmatrix},\]
such that
\begin{enumerate}
    \item[(A1)] $0=F_0(0),\ F_0(1)=F_1(0),\ F_1(1)=1$;
    \item[(A2)] $F_0,F_1$ are strictly increasing.
\end{enumerate}
In this article, a {\it solution} of the de Rham system with increasing data $F_0,F_1$,
\begin{equation}\label{eqDR}
    f(g_j(x))=F_j(f(x)),
\end{equation}
is a strictly increasing, left continuous function $[0,1]\xrightarrow{f}[0,1]$ such that \eqref{eqDR} holds. Discontinuous solutions naturally arise, as we shall see, in potential theory. A unique solution $f$ of the IFS \eqref{eqDR} exists which satisfies $f(j)=j$ for $j=0,1$ (Theorem \ref{theodeRhamExists}). de Rham's seminal article \cite{dR1957} considered the case
\[
F_0(x)=rx,\ F_1(x)=(1-r)x+r,
\]
with $0<r<1$.

In \cite{Okamura2014} several results concerning the regularity of $f$ are proved in terms of Hausdorff measure. Further regularity and dimension questions for related de Rham and
graph-directed systems were studied in \cite{Okamura2016,Okamura2020}. A rather general setting is considered in \cite{Okamura2026}.

Here, those results will be connected to dyadic potential theory in two ways.
\begin{enumerate}
    \item[(1)] It is shown (Theorem \ref{theodrCap}) that for a large region in the space of data, the solution $f$ of \eqref{eqDR} can be interpreted as a cumulative capacitary function. One interesting consequence is that, for these data, the function $f(x)$ has several variational interpretations. 
    \item[(2)] Okamura's Theorem A concerning the regularity of a deRham function is sharpened by measuring sets' size at the upper endpoint in terms of capacity, rather than Hausdorff measure (Theorem \ref{theoMain}). 
\end{enumerate}
Our main motivation was understanding how dyadic capacities differ from classical "continuous" Riesz capacities. It has been known for a long time that the dyadic and the classical capacities of a subset of $[0,1]$ are comparable \cite{BP1992}\cite{ARSW2014}, but we will see here that the derivative of the dyadic cumulative capacitary function, to be defined below, is mutually singular with respect to the derivative of the classical cumulative one.
Next, we recall the main result in \cite{Okamura2014} and present our new findings.

The data $(F_0,F_1)$ satisfying (A1) and (A2) depend on three real parameters:  the four entries of each of the matrices $A_0,A_1$ can be multiplied times a nonzero constant, and condition (A1) imposes three linear relations. We will use several parametrizations for the data, the simplest of which is by means of {\it linear parameters},
\begin{equation}\label{eqLinPar}
    A_0=\begin{pmatrix}
        1&0\\ \gamma-\lambda&\lambda
    \end{pmatrix},\ A_1=\begin{pmatrix}
        q-1&1\\ q-\gamma&\gamma
    \end{pmatrix}, \text{ with }(\lambda,\gamma,q)\in\mathcal{L}=(0,\infty)\times(1,\infty)\times(0,\infty).
\end{equation}
We have $\gamma=F_0(1)^{-1}=F_1(0)^{-1}$.

Okamura further assumed an open condition:
\begin{enumerate}
    \item[(A3)] $F_0,F_1$ are strict contractions with respect to the Euclidean metric on $[0,1]$. 
\end{enumerate}

Let $\mu_f$ be the distributional derivative of $f$, the Borel probability measure which is defined by
\[
f(x)=\mu_f([0,x)).
\]
The problem is estimating the minimal size of a Borel set $K\subseteq[0,1]$ such that $\mu_f(K)=1$. Okamura did it in terms of Hausdorff measures, we will do it in terms of set capacities.

Next, we precisely state Okamura's main theorem and introduce some parameters which also appear in our main result. We denote by $\dim_H$ the Hausdorff dimension of a set.
\begin{ThmA}[Okamura 2014]\label{theoA} Suppose (A1),(A2), (A3) hold. Then, 
    there are explicit constants $0\le \theta_0\le\theta_1\le\log 2$ depending on $F_0,F_1$ such that:
    \begin{enumerate}
        \item[(upper)] there exists a Borel $K_0\subseteq[0,1]$ with $\mu_f(K_0)=1$ and $\dim_H(K_0)\le\theta_1/\log 2$;
        \item[(lower)] if $K\subseteq[0,1]$ is a Borel subset  such that $\dim_H(K)<\theta_0/\log 2$, then $\mu_f(K)=0$.
    \end{enumerate}
\end{ThmA}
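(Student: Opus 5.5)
The plan is to read $\mu_f$ as a random-walk (Markov-type) measure on the binary tree and to get both bounds from a law of large numbers for the local ratios $\log\mu_f(I)/\log|I|$ over dyadic intervals $I$. For a dyadic interval $I_w=g_{w_1}\circ\cdots\circ g_{w_n}([0,1])$ with $w=w_1\cdots w_n\in\{0,1\}^n$, the functional equation \eqref{eqDR} gives
\[
\mu_f(I_w)=|F_{w_1}\circ\cdots\circ F_{w_n}([0,1])|.
\]
In the linear parametrization \eqref{eqLinPar}, $F_j=\Phi(A_j;\cdot)$, so the composition is $\Phi(A_{w_1}\cdots A_{w_n};\cdot)$. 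The length of a Möbius image of $[0,1]$ is $|\det M|/((c+d)d)$ for $M=\begin{pmatrix}a&b\\ c&d\end{pmatrix}$. Hence $\mu_f(I_w)$ is controlled by the bottom row $(c_n,d_n)$ of the matrix product and by the determinants, which are multiplicative. To track the conditional masses $\mu_f(I_{wj})/\mu_f(I_w)$ I introduce the projective state $t_n=c_n/d_n$, equivalently the point $F_{w}$ evaluated appropriately. These conditional masses are explicit functions $p_j(t_n)$, and $t_{n+1}=T_j(t_n)$ is again a Möbius map. So $\mu_f$ is the law of a Markov chain on $\{0,1\}^{\mathbb N}$ driven by a one-dimensional projective dynamical system.

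The first key step is to build a compact invariant interval $J$ for the maps $T_0,T_1$ on which $p_0,p_1$ are continuous and bounded away from $0$; (A3) should supply this. Then $-\frac1n\log\mu_f(I_{w_1\dots w_n})=\frac1n\sum_{k<n}\phi(t_k,w_{k+1})$ with $\phi(t,j)=-\log p_j(t)$. Next I define $\theta_1=\sup$ and $\theta_0=\inf$ of the possible limit averages, concretely the max and min of $\int\phi\,d\nu$ over stationary measures of the skew system, or explicitly the extremal values at the fixed points of $T_0,T_1$ if the cocycle is monotone. Entropy considerations give $\theta_1\le\log 2$.

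For the upper bound I set
\[
K_0=\Bigl\{x:\liminf_n -\tfrac1n\log\mu_f(I_n(x))\le\theta_1\Bigr\}.
\]
The ergodic-average bound gives $\limsup\le\theta_1$ along every path, so $\mu_f(K_0)=1$. The mass distribution principle in its reverse form (the Billingsley lemma for dyadic nets) then yields $\dim_H K_0\le\theta_1/\log2$. For the lower bound, every $x$ has $\liminf_n -\frac1n\log\mu_f(I_n(x))\ge\theta_0$, so $\mu_f(I)\lesssim|I|^{(\theta_0-\epsilon)/\log 2}$ uniformly for small dyadic $I$, using compactness of $J$ to make the estimate uniform. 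By the mass distribution principle, any $K$ of dimension below $\theta_0/\log2$ can be covered by dyadic intervals with $\sum|I|^{s}$ small, where $s\in(\dim_H K,(\theta_0-\epsilon)/\log2)$, which forces $\mu_f(K)=0$.

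The main obstacle is the first key step: producing the invariant interval $J$ and the uniform ergodic bounds for a non-uniformly-contracting projective cocycle. Without this, $\theta_0,\theta_1$ are not explicit. I would first try to show that, under (A3), the functions $p_j$ are monotone in $t$ and $T_j$ are order preserving. The extreme averages would then be attained along the constant sequences $000\ldots$ and $111\ldots$, that is, at the fixed points of $T_0,T_1$. This would give $\theta_0,\theta_1$ as $\min$ and $\max$ of $-\log F_0'(0)$, $-\log F_1'(1)$-type quantities.
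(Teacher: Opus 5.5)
\textbf{There is a genuine gap.} Your symbolic set-up is sound, and it is in fact Okamura's. For $M=A_{w_1}\cdots A_{w_n}$ with bottom row $(c,d)$ one gets $\mu_f(I_{w0})/\mu_f(I_w)=p_0(y)=\frac{y+1}{y+\gamma}$ with $y=c/d$, and $y$ evolves under $\Phi({}^tA_j;\cdot)$ starting from $y=0$. So your state $t_n$ is Okamura's $Z_n$, and the invariant interval you are looking for is $[\alpha,\beta]$. It must contain the initial state $0$, not only the fixed points $r_0,r_1$; under (A3) one has $-1<\alpha\le\beta<\infty$.

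The gap is the step ``the ergodic-average bound gives $\limsup\le\theta_1$ along every path''. The summand $-\log p_{w_{k+1}}(t_k)$ depends on the digit chosen, not only on the state. Pathwise averages are therefore not confined to an interval whose right endpoint is $\le\log2$:
\begin{enumerate}
\item[(a)] Along $000\cdots$ one has $t_k\to r_0$ and $p_0(r_0)=1/\lambda$, so the average tends to $\log\lambda=-\log F_0'(0)$. This exceeds $\log2$ for data satisfying (A1)--(A3), e.g.\ $(\lambda,\gamma,q)=(3,2,2)$.
\item[(b)] Already in de Rham's case $F_0(x)=rx$, where $[\alpha,\beta]=\{0\}$ and $\theta_0=\theta_1=s(r)$, the pathwise averages fill $[-\log\max(r,1-r),-\log\min(r,1-r)]$. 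The $\mu_f$-typical value is $s(r)$, strictly inside when $r\neq\frac12$.
\end{enumerate}
So the constants you propose at the end (``$-\log F_0'(0)$, $-\log F_1'(1)$-type'') are local dimensions at the $\mu_f$-null points $0,1$. They are not the $\theta_0,\theta_1$ of \eqref{eqTheta}, and ``entropy considerations'' cannot give $\theta_1\le\log2$ for them. For the same reason the uniform Frostman bound $\mu_f(I)\lesssim|I|^{(\theta_0-\epsilon)/\log2}$ in your lower bound is false: always following the more probable branch gives exponent $-\log\max(p_0,p_1)<s(p_0)$.

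The missing idea is the one the paper recalls from \cite{Okamura2014} in the proof of Theorem~\ref{theoMain}: Doob-decompose $-\log R_n=L_n+M_n$.
\begin{enumerate}
\item[(1)] The compensator increment is $L_n-L_{n-1}=\mathbb E^{\mu_f}[-\log(R_n/R_{n-1})\mid\mathcal F_{n-1}]=s(p_0(Z_{n-1}))$, the \emph{entropy} of the current state. Trapping $Z_k\in[\alpha,\beta]$ then gives $n\theta_0\le L_n\le n\theta_1$ on every path. No stationary measures or ergodic theorem are needed. If your ``stationary measures of the skew system'' were those of the chain with transition probabilities $p_j(t)$, then $\int\phi\,d\nu=\int s\circ p_0\,d\nu$ is the right integrand, but the pathwise bound on $L_n$ already suffices.
\item[(2)] The fluctuation $M_n$ is a martingale with bounded increments under (A3). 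Hence $M_n/n\to0$, but only $\mu_f$-almost surely.
\end{enumerate}
It follows that $\theta_0\le\liminf_n-\frac1n\log\mu_f(I_n(x))\le\limsup_n-\frac1n\log\mu_f(I_n(x))\le\theta_1$ for $\mu_f$-a.e.\ $x$. Billingsley's lemma then finishes both parts:
\begin{enumerate}
\item[(upper)] Take $K_0$ as you defined it; it now has full $\mu_f$-measure, and the covering argument of Lemma~\ref{lem:stopping-cylinder} gives $\dim_H K_0\le\theta_1/\log2$.
\item[(lower)] Intersect $K$ with a set of positive $\mu_f$-measure on which $\mu_f(I_n(x))\le2^{-ns}$ for all $n\ge N$ (Egorov), then cover. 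This replaces the uniform Frostman estimate.
\end{enumerate}
Finally, the extremes of $s\circ p_0$ on $[\alpha,\beta]$ need not sit at fixed points. Since $s$ is not monotone, $\theta_1=\log2$ whenever $\frac12\in p_0([\alpha,\beta])$.
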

As a consequence,
\begin{equation}\label{eqSynth}
\frac{\theta_0}{\log2}\le\inf\{s\ge0:\mu_f(K)=1\text{ for some Borel $K\subseteq[0,1]$ with $\dim_H(K)=s$}\}\le\frac{\theta_1}{\log 2}.
\end{equation}
The information is sharp when $\theta_0=\theta_1$, the upper estimate provides nontrivial information when $\frac{\theta_1}{\log 2}<1$ and the lower one when $\theta_0>0$.

The numbers $\theta_0,\theta_1$ are expressions of $\gamma$ and of other two real dynamical parameters $\alpha,\beta$ attached to the data $(F_0,F_1)$. Let $r_0\in (-\infty,+\infty]$ be the unique fixed point of $\Phi(^tA_0;\cdot)$ ($r_0=+\infty$ if $a_0=d_0$); and let $r_1$ be the fixed point of $\Phi(^tA_1;\cdot)$ other than $-1$. Then,
\[
\alpha=\min\{0,r_0,r_1\},\ \beta=\max\{0,r_0,r_1\}.
\]
For $y\in(-1,\infty]$, consider the probability distribution
\[
p_0(y)=\frac{y+1}{y+\gamma},\ p_1(y)=1-p_0(y),
\]
and denote the entropy of the probability distribution $\{p,1-p\}$ by 
\[
s(p)=-p\log p-(1-p)\log(1-p)\in[0,\log 2].
\]
Then,
\begin{equation}\label{eqTheta}
\theta_0=\min\{s(p_0(y)):y\in[\alpha,\beta]\}\le\theta_1=\max\{s(p_0(y)):y\in[\alpha,\beta]\}.
\end{equation}
Inspection of the proof shows that Theorem A holds if (A3) is replaced by the weaker condition that a solution $f$ exists and
\begin{enumerate}
    \item[(AA3)] ({\it trapping condition}) $\alpha\ge -1$. 
\end{enumerate}
Let $\mathcal{L}_{\mathrm{tr}}$ be set of the parameters in $\mathcal{L}$ such that (AA3) holds. We will show (Proposition \ref{propTrapped} in \S \ref{SSectCS}) that
\begin{equation}\label{eqLinParTr}
    \mathcal{L}_{\mathrm{tr}}=\{(\lambda,\gamma,q):\lambda\ge 1,q\ge\gamma-1\}.
\end{equation}
We also prove a general existence theorem for increasing Möbius de Rham systems of the form \eqref{eqDR}, which appears not to be explicitly available in the literature.
\begin{theorem}\label{theodeRhamExists}
Let $(\lambda,\gamma,q)\in\mathcal{L}$ and let $F_j=\Phi(A_j;\cdot)$. 
Then \eqref{eqDR} has a unique strictly increasing, left continuous
solution \(f\) satisfying \(f(0)=0\) and \(f(1)=1\), and:
        \begin{enumerate}
        \item[(i)] if $(\lambda,\gamma,q)\in\mathcal{L}_{\mathrm{tr}}$, then $f$ is continuous;
        \item[(ii)] if $(\lambda,\gamma,q)\in\mathcal{L}\setminus\mathcal{L}_{\mathrm{tr}}$, then $f$ is continuous at every non-dyadic point, and has jump discontinuities
at the dyadic points, with the usual one-sided interpretation at the
endpoints. 
    \end{enumerate}
\end{theorem}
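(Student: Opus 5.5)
Define $f$ on dyadic rationals via compositions of the $F_j$, and control the gaps with a projective-contraction argument.

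\emph{Step 1: values at dyadic rationals.} For a word $w=w_1\cdots w_n\in\{0,1\}^n$ set $F_w=F_{w_1}\circ\cdots\circ F_{w_n}$ and $I_w=g_w([0,1))$. The equation \eqref{eqDR} together with $f(0)=0$, $f(1)=1$ forces
\[
f(g_w(0))=F_w(0),\qquad f(g_w(1))^{-}=F_w(1),
\]
where the second value is the left limit. Define $f$ on the dyadic rationals $D$ by the first formula. This is consistent because $F_w(0)=F_{w0}(0)$, using $F_0(0)=0$. By (A1) the intervals $J_w=[F_w(0),F_w(1)]$ are nested. Moreover $J_{w0}$ and $J_{w1}$ are adjacent and tile $J_w$, because $F_0(1)=F_1(0)$. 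Since each $F_j$ is strictly increasing, $f$ is strictly increasing on $D$. For a non-dyadic $x$ with binary digits $w_1w_2\cdots$, set $f(x)=\lim_n F_{w_1\cdots w_n}(0)$. For $x\in(0,1]$ we may equally set $f(x)=\sup\{f(d):d\in D,\,d<x\}$, which gives the unique left-continuous increasing extension; on $D$ one checks that this agrees with $F_w(0)$ exactly when $f$ is left continuous there. The functional equation then follows from $F_{jw}=F_j\circ F_w$ together with density. Uniqueness also follows: any solution coincides with this one on $D$ and is left continuous.

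\emph{Step 2: shrinking of the cylinders.} The core claim is that $|J_{w_1\cdots w_n}|\to0$ along every infinite word $w$ that is not eventually constant. This gives continuity at non-dyadic points. The eventually constant words correspond to the one-sided behaviour at dyadic points: $w0^\infty$ and $w1^\infty$. There $|J_{w1^n}|\to0$ if and only if the fixed point $1$ of $F_1$ is attracting from the left, and similarly for $0$ and $F_0$. A jump at a dyadic point $g_w(1/2)$ therefore occurs exactly when $F_{w}(F_0^n(1))\not\to F_w(F_0(1))$, that is, when $F_0^n$ or $F_1^n$ fails to converge to the endpoint fixed point. From the linear parameters one gets $F_0'(0)=1/\lambda$, so $0$ attracts under $F_0$ on $(0,1]$ if and only if $\lambda\ge1$, using the parabolic case at $\lambda=1$ together with the fact that $F_0$ has no other fixed point in $(0,1]$. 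Likewise $1$ attracts under $F_1$ if and only if $F_1'(1)=\gamma/q\le\ldots$. Concretely this should be $q\ge\gamma-1$, i.e.\ the fixed point $r_1$-type condition; it is computed from $\det A_1=(q-1)\gamma-(q-\gamma)=q(\gamma-1)-\ldots$. These two conditions are exactly the description \eqref{eqLinParTr} of $\mathcal{L}_{\mathrm{tr}}$, which I would prove directly here (Proposition \ref{propTrapped}). If either fails, the corresponding iterates converge to a fixed point strictly inside $(0,1)$. The cylinders $J_{w10^n}$ and $J_{w01^n}$ then fail to shrink, so $f$ has a genuine jump at every dyadic point; self-similarity propagates a jump at $1/2$ to all dyadic points. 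This gives (i) and (ii).

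\emph{Step 3: the main obstacle.} The hard part is shrinking along words that are not eventually constant, since the maps need not be Euclidean contractions. My plan is to use the Hilbert (projective) metric on $(0,1)$, or on slightly enlarged intervals. Each $F_j$ is a Möbius map sending $[0,1]$ into itself, so it is a weak contraction in the Hilbert metric of $[0,1]$. The composition $F_0F_1$ maps $[0,1]$ into a compact subinterval of $(0,1)$: it avoids $0$ because $F_1(0)>0$, and it avoids $1$ because $F_0(1)<1$. Birkhoff's theorem then makes $F_0F_1$ a strict contraction with a uniform ratio $\kappa<1$; the same holds for $F_1F_0$. A non-eventually-constant word contains infinitely many occurrences of $01$. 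Every $J_w$ with $|w|\ge1$ lies in $[0,F_0(1)]$ or $[F_1(0),1]$, where the Hilbert metric of the larger interval is controlled. Combining these, the Hilbert diameter of $F_{w_1\cdots w_n}([0,1])$, measured on the image of the first $01$-block, is at most $C\kappa^{m}$, where $m$ counts the later $01$ occurrences. Converting Hilbert diameter to Euclidean length needs care near the endpoints, where the interval may accumulate at $0$ or $1$. I would handle this by running the contraction inside a cylinder $F_{01}([0,1])$, which is compactly contained in $(0,1)$, so that the two metrics are comparable there.
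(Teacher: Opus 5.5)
Your core argument is the paper's. The images $F_{w_1\cdots w_n}([0,1])$ shrink along words that are not eventually constant, because each $F_j$ is non-expansive and the mixed compositions $F_0F_1$, $F_1F_0$, whose images are compactly contained in $(0,1)$, are strict contractions of a hyperbolic metric. The dyadic points are then reduced to $1/2$ and decided by the fixed points of $F_0$ and $F_1$.

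The Hilbert metric of $(0,1)$ is exactly the trace on the real diameter of the Poincar\'e metric the paper uses (after $z\mapsto 2z-1$). So Birkhoff's theorem plays the role of Schwarz--Pick, and your nested intervals $J_w$ replace the abstract transition lemma. This one-dimensional route is fine and slightly more elementary.

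Your endpoint worry is unnecessary. The Hilbert density $1/(x(1-x))\ge 4$ bounds Euclidean length by a quarter of Hilbert length everywhere. Do make sure the finite-diameter block is the innermost one (the last $01$ in reading order), and that the contracting blocks are those to its left.

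The genuine problem is Step 1 when $q<\gamma-1$. Then $F_1$ has the attracting fixed point $\zeta_1=1/(\gamma-q)\in(0,1)$, and $1$ is repelling. The values you rightly call forced are $f(1-2^{-n})=F_1^n(0)$ and $f(1/2)=F_1(0)=F_0(1)$. They give $f(1^-)=\zeta_1<1=f(1)$, and the left limit of $f$ at $1/2$ is $F_0(\zeta_1)<f(1/2)$. In general $F_w(1)$ is the value of $f$ at $g_w(1)$, while the left limit there is $F_w(\zeta_1)$.

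So the regularization $\sup\{f(d):d<x\}$ breaks either $f(1)=1$ or \eqref{eqDR} at dyadic points. Your claim that the functional equation ``follows by density'' is therefore false. In fact no left-continuous solution exists in this subcase; try $(\lambda,\gamma,q)=(2,3,1)$, where $F_1(x)=1/(3-2x)$. The paper's proof has the same hole: it builds $f$ only on non-dyadic points and uses left continuity for uniqueness, without checking \eqref{eqDR} at dyadic points.

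What your nested-interval setup does prove is the following. There is exactly one strictly increasing solution with $f(0)=0$, $f(1)=1$: dyadic values are forced, and the other values are squeezed into the shrinking $J_{w_1\cdots w_n}$, so uniqueness needs no continuity assumption. This solution is continuous off the dyadics, left continuous at the dyadics iff $q\ge\gamma-1$, and right continuous there iff $\lambda\ge1$.

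Two slips in Step 2 also need fixing. First, $\det A_1=q(\gamma-1)$, so $F_1'(1)=(\gamma-1)/q$, not $\gamma/q$; then $1$ attracts from the left iff $q\ge\gamma-1$, as in \eqref{eqFoneItRevised}. Second, your jump criterion should compare $F_wF_1F_0^n(1)\to F_wF_1(\zeta_0)$ with $F_wF_1(0)$, and symmetrically $F_wF_0F_1^n(0)\to F_wF_0(\zeta_1)$ with $F_wF_0(1)$, rather than use $F_wF_0^n(1)$.
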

The main ingredient of the proof is a nesting property of the IFS, as is the case with many statements of the same kind.

A number of de Rham systems of the form \eqref{eqDR} admit a potential theoretic model. Consider the {\it dyadic tree} $T_\ast$ having root-edge $\ast$. Its edges are finite words ${\mathrm a}=(e_j)_{j=0}^n=\ast e_1\dots e_n$ with $e_0=\ast$ and $e_j\in\{0,1\}$ if $j\ge1$. We denote by $E(T_\ast)$ the edge set and we write $|{\mathrm a}|=n$, and for $j\in\{0,1\}$ let $|{\mathrm a}|_j$ be the number of occurrences of $j$ among $e_1,\dots,e_n$.

The boundary $\partial T_*$ is identified with infinite sequences
\[\zeta = (*,e_1,e_2,\dots),\qquad e_n\in\{0,1\}\ (n\ge 1).\]
The tree's boundary can be identified with the interval $[0,1]$ by means of the usual binary coding map $\partial T_\ast\xrightarrow{\Lambda}[0,1]$,
\[
\Lambda((e_j)_{n=0}^\infty)=\sum_{n=1}^\infty\frac{e_n}{2^n},
\]
which fails to be injective precisely at the dyadic rationals, with the
exceptions of \(0\) and \(1\).

The predecessor set of $\zeta\in\partial T_\ast$ is
\[P_*(\zeta)=\{\zeta(n)=*e_1\dots e_n:\ n\ge 0\}\subset E(T_\ast).\]
We endow $\partial T_*$ with the usual Cantor ultra-metric $\delta$: 
\[
\delta(\zeta,\xi)=2^{-\max\{n:\zeta_n=\xi_n\}},
\]
which makes it into a compact, totally disconnected metric space. Let $E(T_\ast)\xrightarrow{\wt}(0,\infty)$ be a weight on the edge set. 

For a fixed weight $\wt$, define the {\it capacity} of a Borel set $E\subseteq \partial T_*$ to be:
\begin{equation}\label{eq:cap-def}
\Cap_*^{\wt}(E)=\inf\left\{\sum_{{\mathrm a}\in E(T_*)} f({\mathrm a})^2\wt({\mathrm a}):\ f\ge 0,\ \sum_{{\mathrm a}\in P_*(\zeta)} f({\mathrm a})\wt({\mathrm a})\ge 1\ \text{for all }\zeta\in E\right\}.
\end{equation}
Tree capacities of this type are closely connected with random walks
and percolation on trees; see, for instance, \cite{L1992}.
In the special case where $\wt^{\lambda,\mu}(\mathrm{a})=\lambda^{|\mathrm{a}|_0}\mu^{|\mathrm{a}|_1}$, $\lambda,\mu>0$, we write $\Cap_*^{\wt}=\Cap_\ast^{\lambda,\mu}$. A computation given below shows that (Proposition \ref{propTotCap})
\begin{equation}\label{eqTotCap}
    C:=\Cap_\ast^{\lambda,\mu}(\partial T_\ast)=\begin{cases}
        \frac{\lambda+\mu-\lambda\mu}{\lambda+\mu}\text{ if }\lambda+\mu-\lambda\mu>0,\\
        0\text{ if }\lambda+\mu-\lambda\mu\le0.
    \end{cases}
\end{equation}
Let
\[
O_{pot}=\{(\lambda,\mu):\lambda>0,\mu>0,\lambda+\mu-\lambda\mu>0\}.
\]
\begin{theorem}\label{theodrCap}
    Suppose $(\lambda,\mu)\in O_{pot}$ and let $f:[0,1]\to[0,1]$ be the normalized, cumulative capacitary function,
    \[
    f(x)=C^{-1}\Cap_\ast^{\lambda,\mu}(\Lambda^{-1}([0,x))).
    \]
    Then, $f$ is a solution of \eqref{eqDR} with
    \begin{equation}\label{eqAzoeruno}
    A_0=\begin{pmatrix}
        1&0\\
        C&\lambda
    \end{pmatrix},\ A_1=\begin{pmatrix}
        \frac{1}{\mu}&\frac{1}{\lambda}\\
        \frac{C}{\mu}&1+\frac{C}{\lambda}
    \end{pmatrix}.
    \end{equation}
\end{theorem}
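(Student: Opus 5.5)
The plan is to prove a self-similarity (series--parallel) law for $\Cap_\ast^{\lambda,\mu}$ and then translate it into \eqref{eqDR} via $\Lambda$. Write $E_j\subseteq\partial T_\ast$ for the boundary points with $e_1=j$, and $\sigma_j:\partial T_\ast\to E_j$ for the shift-insertion $(\ast,e_1,e_2,\dots)\mapsto(\ast,j,e_1,e_2,\dots)$. Then $\Lambda\circ\sigma_j=g_j\circ\Lambda$. The substitution $g(\mathrm a)=f(\mathrm a)\wt(\mathrm a)$ turns \eqref{eq:cap-def} into minimizing $\sum g^2/\wt$ subject to $\sum_{P_\ast(\zeta)}g\ge1$. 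So $\Cap$ is an effective conductance, with edge $\mathrm a$ a resistor of resistance $1/\wt(\mathrm a)$.

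The subtree hanging from the edge $\ast j$ is a copy of $T_\ast$ with all weights multiplied by $\lambda$ (if $j=0$) or $\mu$ (if $j=1$). Hence, for $B\subseteq\partial T_\ast$, the conductance of $\sigma_0(B)$ measured from the top of edge $\ast0$ is $\lambda\Cap_\ast(B)$, and similarly with $\mu$ for $\sigma_1$. Combining these in parallel and then in series with the root edge gives
\[
\Cap_\ast\bigl(\sigma_0(B_0)\cup\sigma_1(B_1)\bigr)=\Psi\bigl(\lambda\Cap_\ast(B_0)+\mu\Cap_\ast(B_1)\bigr),
\]
where $\Psi$ is the explicit Möbius map coming from the series law with the root resistance. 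I will take \eqref{eqTotCap} (Proposition \ref{propTotCap}) as given and fix the normalization of $\Psi$ by requiring $C=\Psi((\lambda+\mu)C)$. This checks the recursion against the total capacity and pins down exactly how the root edge enters.

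The first real step is to prove this recursion rigorously. I would do it first for finite unions of cylinders, where the minimization is a finite-dimensional quadratic problem. There the optimal $g$ splits into independent optimal flows in the two subtrees, rescaled by a common factor determined by the constraint on the root edge. I would then pass to open sets by increasing unions, using continuity of capacity from below, which follows from the variational definition by a standard weak-compactness or monotone-limit argument. The sets needed here are all open: $\Lambda^{-1}([0,x))$ is a countable increasing union of finite unions of cylinders. So no capacitability theorem is required beyond monotone limits. I expect this step to be the main obstacle, in particular checking continuity from below and keeping track of the root-edge normalization.

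Given the recursion, the functional equation follows by applying it to the two halves of $[0,1]$.
\begin{itemize}
\item For $x\in[0,1]$, $\Lambda^{-1}([0,x/2))=\sigma_0(\Lambda^{-1}([0,x)))$. The recursion with $B_1=\emptyset$ gives $f(x/2)=C^{-1}\Psi(\lambda C f(x))$.
\item Similarly, $\Lambda^{-1}([0,(x+1)/2))=E_0\cup\sigma_1(\Lambda^{-1}([0,x)))$, which gives $f((x+1)/2)=C^{-1}\Psi(\lambda C+\mu Cf(x))$.
\end{itemize}
Both right-hand sides are compositions of Möbius maps in $y=f(x)$. I would compute their matrices and simplify using the identity relating $C$, $\lambda$, $\mu$ from \eqref{eqTotCap}, aiming to obtain exactly $\Phi(A_0;y)$ and $\Phi(A_1;y)$ with $A_0,A_1$ as in \eqref{eqAzoeruno}, up to scalar multiples. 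As consistency checks I will verify $F_0(0)=0$, $F_1(1)=1$ and $F_0(1)=F_1(0)$; the last reduces to $\Cap_\ast(E_0)=\Psi(\lambda C)$ computed both ways.

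It remains to check the qualitative properties. Normalization holds because $f(0)=0$ and $f(1)=C^{-1}\Cap_\ast(\partial T_\ast)=1$, since $\Lambda^{-1}([0,1))$ misses only the all-ones sequence, a set of zero capacity when $C>0$. Left continuity is continuity of capacity along the increasing union $\Lambda^{-1}([0,x_n))\uparrow\Lambda^{-1}([0,x))$. Strict monotonicity holds because $[x,x')$ contains a dyadic interval, and adding the corresponding cylinder strictly increases capacity by the recursion. The recursion gives this because $\Psi$ is strictly increasing and every cylinder has positive capacity when $(\lambda,\mu)\in O_{pot}$. Hence $f$ is a solution in the paper's sense, and by the uniqueness in Theorem \ref{theodeRhamExists} it is the de Rham solution for these data.
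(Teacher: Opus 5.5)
\textbf{The scaling step is inverted, and this breaks the computation.} After the substitution $g=f\wt$, the constraint $\sum_{P_\ast(\zeta)}g\ge1$ makes $g$ a potential drop. So $\sum g^2/\wt$ is a Dirichlet energy with conductance $1/\wt(\mathrm a)$, i.e.\ resistance $\wt(\mathrm a)$, not $1/\wt(\mathrm a)$. The dual energy $\sum_{\mathrm a}\nu(\partial T_{\mathrm a})^2\wt(\mathrm a)$ (Thomson's principle) and the rule \eqref{eqReascale} say the same. Hence multiplying the weights of the subtree below $\ast0$ by $\lambda$ \emph{divides} its capacity by $\lambda$: $\Cap_{\ast0}(\sigma_0(B))=\Cap_\ast^{\lambda\wt}(B)=\lambda^{-1}\Cap_\ast(B)$. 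The correct law is
\[
\Cap_\ast\bigl(\sigma_0(B_0)\cup\sigma_1(B_1)\bigr)=\Psi\Bigl(\frac{\Cap_\ast(B_0)}{\lambda}+\frac{\Cap_\ast(B_1)}{\mu}\Bigr),\qquad \Psi(X)=\frac{X}{1+X},
\]
which is \eqref{eqRicorso}. Here $\Psi$ is determined by the series law with root resistance $\wt(\ast)=1$; it is not a free normalization, and $C=\Psi((1/\lambda+1/\mu)C)$ then reproduces \eqref{eqTotCap}.

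Your version cannot be rescued by adjusting $\Psi$. With $\Psi(X)=X/(1+X)$, the identity $C=\Psi((\lambda+\mu)C)$ forces $C=(\lambda+\mu-1)/(\lambda+\mu)$, contradicting \eqref{eqTotCap}. For any series law $\Psi(X)=X/(1+rX)$, your first bullet gives $F_0'(0)=\lambda$, while $\Phi(A_0;y)=y/(Cy+\lambda)$ has derivative $1/\lambda$ at $0$. So the final identification with \eqref{eqAzoeruno} fails as soon as $\lambda\neq1$. With the corrected law, and writing $c=Cf$, your two bullets become $c(x/2)=c(x)/(\lambda+c(x))$ and $c((x+1)/2)=\Psi(C/\lambda+c(x)/\mu)$. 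Dividing by $C$ gives exactly $\Phi(A_0;\cdot)$ and $\Phi(A_1;\cdot)$. That is the paper's argument. The paper quotes the recursion from the literature; proving it yourself via finite unions of cylinders and increasing limits is a reasonable self-contained substitute.

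\textbf{Your justification of $f(1)=1$ is also false.} The ray $(\ast,1,1,\dots)$ has capacity $\bigl(\sum_{n\ge0}\mu^n\bigr)^{-1}=1-\mu>0$ whenever $\mu<1$. Such $\mu$ occur in $O_{pot}$ (e.g.\ $\lambda=1$, $\mu=1/2$, $C=2/3$), so this point cannot be discarded as polar. The paper instead uses the non-atomic Bernoulli equilibrium measure restricted to $\Lambda^{-1}([0,x_n))$, with $x_n\uparrow1$. Within your own framework there is a quicker route. Since $\Lambda^{-1}([0,1))=\sigma_0(\partial T_\ast)\cup\sigma_1(\Lambda^{-1}([0,1)))$, the corrected recursion shows that $c(1)$ is a nonnegative root of $c^2/\mu+(1+C/\lambda-1/\mu)c-C/\lambda=0$. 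The roots have product $-C\mu/\lambda<0$, and $C$ is a root by \eqref{eqTotCap}, hence $c(1)=C$.

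The same problem occurs in your second bullet at $x=0$. There $\Lambda^{-1}([0,1/2))$ is $E_0$ minus the point $(\ast,0,1,1,\dots)$, which again may have positive capacity. Instead write $\Lambda^{-1}([0,1/2))=\sigma_0(\Lambda^{-1}([0,1)))$ and use $f(1)=1$ together with $F_0(1)=F_1(0)$.
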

The parameters $\lambda,\mu$ appear symmetrically in the weight $\wt$, hence in the corresponding potential theory, but not in the matrices $A_0,A_1$. This is due to the fact that the cumulative capacitary function $f$ considers left intervals $[0,x)$. A nonlinear extension will be stated in Theorem \ref{theoCapNl}.

The matrices $A_0,A_1$ in \eqref{eqAzoeruno} make perfect sense  and the associated fractional linear maps satisfy (A1) and (A2) even when $C\le0$. We have then a set of data which depends on two parameters $(\lambda,\mu)\in(0,\infty)\times(0,\infty)$. In the threshold cases $C=0$ we obtain de Rham's original system.

A third "projective" parameter can be added to the picture. For $t>0$, let 
\begin{equation}\label{eqTa}
    T_t=\begin{pmatrix}
    1&0\\
    1-t&t
\end{pmatrix}
\end{equation}
 The maps $\Phi(T_t;\cdot)$ exhaust the increasing fractional linear transformations mapping $[0,1]$ onto itself. The map $t\mapsto T_t$ is a group isomorphism of the multiplicative $(0,\infty)$ onto the set of such maps. If we let $B_j=T_tA_jT_t^{-1}$, we obtain a family of de Rham data $(\Phi(B_0,\cdot),\Phi(B_1,\cdot))$ which
 depends on three {\it potential theoretic parameters},
 \begin{equation}\label{eqPTpar}
     \mathcal{P}=\{(\lambda,\mu,t):\lambda,\mu,t>0\}.
 \end{equation}
 However, we will see that the data having a potential theoretic parametrization do not exhaust those satisfying (A1) and (A2), and the same is true for those which further satisfy the trapping condition (AA3), which (Proposition \ref{propPTpar}) corresponds to
 \begin{equation}\label{eqPTparTrap}
     \mathcal{P}_{\mathrm{tr}}=\{(\lambda,\mu,t)\in\mathcal{P}:\lambda\ge1,(\lambda+\mu)^2\ge\lambda^2\mu\}.
\end{equation}
Since fractional linear transformations are smooth, they do not alter Hausdorff dimensions, nor the positivity or the vanishing of classical Riesz capacities. The projective parameter $t$ is for this reason harmless, and the conclusions of Theorem \ref{theodeRhamExists} and of Okamura's results hold for $B_0,B_1$ if and only if they hold for $A_0,A_1$.

For \(s\ge0\), define the dyadic Riesz \(s\)-capacity of $E\subseteq\partial T_\ast$ by
\[
\operatorname{Cap}_s(E):=\operatorname{Cap}_\ast^{2^s,2^s}(E)
=
\inf\left\{
\sum_{\alpha\in E(T_\ast)} h(\alpha)^2\,2^{s|\alpha|}:
h\ge0,\ 
\sum_{\alpha\in P_\ast(\zeta)}h(\alpha)\,2^{s|\alpha|}\ge1
\ \text{for every }\zeta\in E
\right\}.
\]
By the classical relations between polarity and Hausdorff dimension (see \S \ref{SSectRBc}), \eqref{eqSynth} is equivalent to 
\begin{equation}\label{eqSynthTwo}
\frac{\theta_0}{\log2}
\le
\inf\left\{
s\ge0:
\exists K\subseteq[0,1]\ \text{Borel},\ 
\mu_f(K)=1,\ 
\Cap_s(\Lambda^{-1}(K))=0
\right\}
\le
\frac{\theta_1}{\log2}.
\end{equation}
Neither Theorem A, nor \eqref{eqSynthTwo}, however, say anything about the size of a Borel set at the endpoints of the interval $[\theta_0/\log 2,\theta_1/\log2]$. About the higher endpoint, we have a potential theoretic result which goes in the direction of singularity.
\begin{theorem}\label{theoMain} Suppose (A1), (A2), and (AA3) hold. Then, there exists a Borel set $K_0\subset[0,1]$ such that $\mu_f(K_0)=1$ and $\Cap_{\frac{\theta_1}{\log2}}(K_0)=0$.
\end{theorem}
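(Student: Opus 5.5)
We use the symbolic dynamics of the de Rham system directly. For an edge $\mathrm a=\ast e_1\dots e_n$ write $I_{\mathrm a}$ for the dyadic interval coded by $\mathrm a$, and $\mu_f(I_{\mathrm a})=\prod_{k=1}^n p_{e_k}(y_{k-1})$. Here $y_k=y_k(\zeta)$ is the projective state obtained by acting on an initial state $y_0$ with the transposed matrices ${}^tA_{e_1},\dots,{}^tA_{e_k}$; this is the standard product formula used by Okamura, with $p_0,p_1$ as in the introduction. Under (AA3) the interval $[\alpha,\beta]$ is invariant under the transposed maps, so $y_k\in[\alpha,\beta]$ for all $k$ and $\zeta$. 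Set $s=\theta_1/\log 2$ and $\phi_n(\zeta)=-\log\mu_f(I_{\zeta(n)})=\sum_{k=1}^n-\log p_{e_k}(y_{k-1})$.

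\textbf{Step 1: martingale decomposition.} Under $\mu_f$ (viewed on $\partial T_\ast$ via $\Lambda$, which is harmless since $\mu_f$ has no atoms at dyadics in the continuous case, and the discontinuous case is excluded by (AA3) and Theorem \ref{theodeRhamExists}(i)), the conditional expectation of the $k$-th increment given $\zeta(k-1)$ is the entropy $s(p_0(y_{k-1}))\le\theta_1$. Hence $\phi_n=M_n+\sum_{k<n}s(p_0(y_k))$, where $M_n$ is a martingale with bounded increments. The increments are bounded because $p_0$ stays in a compact subinterval of $(0,1)$ on $[\alpha,\beta]$ when $\alpha>-1$. The borderline case $\alpha=-1$ needs separate care, since then $p_0(\alpha)=0$. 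By the law of the iterated logarithm for martingales (or Freedman's inequality), for $\mu_f$-a.e.\ $\zeta$ we get
\[
\phi_n(\zeta)\le n\theta_1+C\sqrt{n\log\log n},
\]
which gives $\mu_f(I_{\zeta(n)})\ge 2^{-ns}e^{-C\sqrt{n\log\log n}}$ eventually.

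\textbf{Step 2: getting capacity zero.} For the dyadic Riesz capacity at the critical exponent we use the tree Wolff/energy dual: $\Cap_s(E)>0$ if and only if $E$ supports a measure $\nu$ with finite energy $\sum_{\mathrm a}2^{s|\mathrm a|}\nu(I_{\mathrm a})^2<\infty$. The LIL bound by itself only says the $\mu_f$ masses are slightly larger than $2^{-ns}$, which is the wrong direction. What we need is that $\mu_f$ has infinite $s$-energy restricted to every set of positive measure, combined with a uniform-integrability argument. The intended form is the following: if $\nu\ll\mu_f$ is carried by $K$, then $\Cap_s(K)=0$ whenever $\sum_n 2^{ns}\mu_f(I_{\zeta(n)})=\infty$ for $\mu_f$-a.e.\ $\zeta$. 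This is the Wolff-potential criterion on trees: the potential $W\mu(\zeta)=\sum_n 2^{ns}\mu(I_{\zeta(n)})$ is infinite a.e., and truncation gives a test-function argument. Step 1 gives each term $\ge e^{-C\sqrt{n\log\log n}}$, so the series diverges. We then take $K_0=\Lambda(\{\zeta:W\mu_f(\zeta)=\infty\})$, which has full measure, and show $\Cap_s(K_0)=0$ using the equivalence between divergence of potentials and capacity zero (a Kellogg/Wolff-type lemma, cf.\ \S\ref{SSectRBc}). Concretely, for each $N$, the set where the truncated potential exceeds $N$ has capacity at most $C/N$, by testing \eqref{eq:cap-def} with $h(\mathrm a)=\mu_f(I_{\mathrm a})/N$. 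The energy of this $h$ is finite only if $\mu_f$ has finite energy, so we localize. We restrict to the subsets where the potential is finite up to generation $m$ and use a stopping-time decomposition.

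\textbf{Main obstacle.} Step 2 is the hardest part: a measure with infinite potential a.e.\ need not give a capacity-zero carrier directly. We plan to handle it through the duality $\Cap_s(E)=\sup\{\nu(E)^2/\mathcal E_s(\nu)\}$. Any $\nu$ on $K_0$ can be decomposed as $\nu=\nu_a+\nu_s$ relative to $\mu_f$. For $\nu_s$, we choose $K_0$ to avoid the relevant sets by intersecting with a $\mu_f$-full set where $\mu_f(I_{\zeta(n)})/\nu(I_{\zeta(n)})$ is controlled, using density theorems for dyadic martingales. For $\nu_a$, the infinite potential forces infinite energy. The borderline $\alpha=-1$ of Step 1 is a secondary difficulty, which we will handle by truncating the states near $-1$.
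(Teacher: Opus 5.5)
\textbf{The gap.} The argument breaks at the sentence ``Step 1 gives each term $\ge e^{-C\sqrt{n\log\log n}}$, so the series diverges.'' This is false. For large $n$ we have $e^{-C\sqrt{n\log\log n}}\le n^{-2}$, so $\sum_n e^{-C\sqrt{n\log\log n}}<\infty$.

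The upper half of the law of the iterated logarithm, $M_n\le C\sqrt{n\log\log n}$, always loses a subexponential factor against $2^{-ns}$. At the critical exponent $s=\theta_1/\log 2$ that loss is fatal. What you need is that $M_n\le m$ for infinitely many $n$, for some finite $m$ depending on the point. Then, since $L_n\le n\theta_1$, you get $2^{ns}\mu_f(I_{\zeta(n)})=e^{n\theta_1-L_n-M_n}\ge e^{-m}$ infinitely often.

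This recurrence is the missing idea, and it is the heart of the paper's proof:
\begin{itemize}
\item Because $-\log(R_n/R_{n-1})\ge 0$, the increments satisfy $M_n-M_{n-1}\ge -s(p_0(Z_{n-1}))\ge-\log 2$.
\item A martingale whose increments are bounded below cannot tend to $+\infty$. To see this, stop it at the first entrance below $-a$; adding $a+\log 2$ gives a nonnegative martingale, which converges.
\end{itemize}
Only this one-sided bound is used. So the case $\alpha=-1$ needs no separate treatment, even though there your two-sided bounded increments, and hence the LIL, are unavailable.

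\textbf{How to finish, and what is off in Step 2.} With recurrence in hand, the paper covers $E_m=\{M_n\le m\ \text{for infinitely many } n\}$ by first-entry dyadic cylinders. These are pairwise disjoint and satisfy $|I|^s\le e^m\mu_f(I)$. This gives $\mathcal H^s(E_m)\le e^m$, and $\mathcal H^s<\infty$ implies $\Cap_s=0$.

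Your potential-theoretic ending would also work at this point, but not by the mechanisms you propose:
\begin{itemize}
\item Testing \eqref{eq:cap-def} with $h(\mathrm a)=\mu_f(I_{\mathrm a})/N$ fails, because $\mu_f$ has infinite energy.
\item The decomposition $\nu=\nu_a+\nu_s$ in your last paragraph is unnecessary.
\end{itemize}
Contrary to your ``main obstacle,'' the set $\{W\mu=\infty\}$ is polar for every finite measure $\mu$. Suppose it contained a compact $K$ of positive capacity. Restrict a finite-energy measure $\nu_0$ on $K$ to $\{W\nu_0\le M\}$ for large $M$, getting a nonzero $\nu$. The maximum principle on the tree gives $W\nu\le M$ everywhere. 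Then $\int W\mu\,d\nu=\int W\nu\,d\mu\le M\mu(\partial T_\ast)<\infty$, which contradicts $W\mu=\infty$ on $K$.

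So the only real obstruction in your plan is proving that $W\mu_f$ diverges $\mu_f$-almost everywhere. That requires the recurrence $\liminf_n M_n<+\infty$ almost surely, not the upper LIL bound.
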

Theorem \ref{theoMain} and Theorem A will be somehow sharpened in \S \ref{SSectOpt}, after considering the structure of the data space.

When $\frac{\theta_1}{\log2}=1$, the upper estimate in Theorem A does not carry information, and we will see that this happens in an open region in $\mathcal{L}_{\mathrm{tr}}$. Okamura has a less quantitative result which nonetheless shows that $f$ is singular, but for an exceptional curve in the parameter space.

\begin{ThmB}[Okamura 2014]\label{theoB}
Suppose (A1), (A2), and (AA3) hold. In the linear parameters,
the following alternatives hold.
\begin{enumerate}
    \item[(a)] If both
    \[
        \lambda=2
        \qquad\text{and}\qquad
        q=2(\gamma-1)
    \]
    are satisfied, then \(\mu_f\) is absolutely continuous.

    \item[(b)] If either
    \[
        \lambda\neq 2
        \qquad\text{or}\qquad
        q\neq 2(\gamma-1),
    \]
    then there exists a Borel set \(K_1\subseteq[0,1]\) such that
    \[
        \mu_f(K_1)=1,
        \qquad
        \dim_H(K_1)<1.
    \]
    In particular, \(\mu_f\) is singular.
\end{enumerate}
\end{ThmB}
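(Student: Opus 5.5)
The statement to prove is the last one displayed, Okamura's Theorem B, which the paper quotes from \cite{Okamura2014}. I would prove it by a martingale/entropy argument on the symbolic side, using the explicit form of the cylinder masses.

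\textbf{Cylinder masses.} Write $\mu_f(I_{e_1\dots e_n})$, for a dyadic interval, as a product of conditional probabilities
\[
\mu_f(I_{e_1\dots e_n})=\prod_{k=1}^n p_{e_k}(y_{k-1}),
\]
where $y_k\in[\alpha,\beta]$ is a projective state variable. This state evolves by the transposed Möbius maps $\Phi({}^tA_{e_{k}};\cdot)$ applied to the current state. That such a representation holds, with $p_0(y)=(y+1)/(y+\gamma)$, is exactly how the parameters $\alpha,\beta,\gamma$ enter Theorem A. The trapping condition (AA3) guarantees that $[\alpha,\beta]$ is forward invariant, so the conditional probabilities stay in a compact subset of $(0,1)$.

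\textbf{Case (a).} When $\lambda=2$ and $q=2(\gamma-1)$, I would check directly that $f(x)=x$ solves the system after a conjugation. More precisely, both $F_j$ become affine with slope $1/2$ in a projective chart. Equivalently, the point $y=\gamma-2$ (where $p_0=1/2$) is a common fixed point of both transposed maps. Then $\mu_f$ is the image of Lebesgue measure under a smooth diffeomorphism, hence absolutely continuous.

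\textbf{Case (b).} Otherwise, $p_0(y_k)\ne 1/2$ cannot hold identically along $\mu_f$-typical orbits. The key step is to show that
\[
\liminf_n \frac1n\sum_{k<n} s\bigl(p_{e_{k+1}}(y_k)\bigr)\le \theta<\log 2 \qquad \mu_f\text{-a.e.}
\]
Here $s$ is the conditional entropy. Combined with the martingale strong law for $-\log\mu_f(I_n(x))$, this gives $\liminf_n \frac{-\log\mu_f(I_n(x))}{n\log2}<1$ a.e. Billingsley's lemma then yields a set $K_1$ of full measure with $\dim_H K_1<1$.

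\textbf{Main obstacle.} The hard step is the uniform gap from $\log 2$. I would argue by compactness and invariance: consider the stationary Markov chain $(y_k,e_k)$ on $[\alpha,\beta]\times\{0,1\}$ and its invariant (ergodic) measures $\nu$. If some $\nu$ had average entropy $\log 2$, then $p_0=1/2$ $\nu$-a.e. Hence $\nu$ would be supported on the single point $y=\gamma-2$, which forces that point to be fixed by both transposed maps. This is precisely the exceptional condition $\lambda=2$, $q=2(\gamma-1)$, giving a contradiction. Upper semicontinuity over the compact set of invariant measures then gives $\theta<\log2$, and a standard Krylov–Bogolyubov averaging passes this bound to $\mu_f$-typical orbits.
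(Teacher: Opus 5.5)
The paper does not prove Theorem B; it quotes it from \cite{Okamura2014}. The closest argument in the paper is the proof of Theorem~\ref{theoMain}.

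Your proposal is correct in outline. Case (b) follows the skeleton of that proof: the decomposition $-\log R_n=L_n+M_n$ with $L_n=\sum_{k<n}s(p_0(y_k))$, $y_0=0$, $y_{k+1}=\Phi({}^tA_{e_{k+1}};y_k)$, and a covering argument of the type of Lemma~\ref{lem:stopping-cylinder}. The new ingredient, and a necessary one, is compactness over the invariant measures of the state chain.

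When $\tfrac12\notin p_0([\alpha,\beta])$ one has $\theta_1<\log2$, and Theorem A already gives $K_1$. When $\tfrac12\in p_0([\alpha,\beta])$, the deterministic bound $L_n\le n\theta_1$ used in the paper is vacuous. Your observation settles this case: any invariant measure with mean entropy $\log2$ must be $\delta_{\gamma-2}$, and $\delta_{\gamma-2}$ is invariant iff $r_0=r_1=\gamma-2$, i.e. iff $\lambda=2$ and $q=2(\gamma-1)$. This yields $\theta^\ast:=\max_\nu\int s\circ p_0\,d\nu<\log2$, and in general $\theta^\ast\le\theta_1$.

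The price is that $\theta^\ast$ controls $L_n/n$ only asymptotically. That suffices for $\dim_H K_1\le\theta^\ast/\log2$. It does not give a capacitary statement at the endpoint, which in the paper needs a bound on $L_n$ valid for every $n$.

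Case (a) is fine. With $t=\gamma-1$, the data $(2,\gamma,2(\gamma-1))$ are the $T_t$-conjugate of $(2,2,2)$, so by uniqueness $f=\Phi(T_{\gamma-1};\cdot)$, a smooth diffeomorphism.

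Three points need repair.

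\emph{Boundary of the trapping region.} (AA3) does not keep the conditional probabilities in a compact subset of $(0,1)$. On the boundary of $\mathcal L_{\mathrm{tr}}$:
\begin{itemize}
\item $q=\gamma-1$ gives $\alpha=r_1=-1$ with $p_0(-1)=0$;
\item $\lambda=1$ gives $\beta=r_0=+\infty$ with $p_0(y)\to1$.
\end{itemize}
Work instead on the closed interval $[\alpha,\beta]\subseteq[-1,+\infty]$. There $p_0$, $\Phi({}^tA_0;\cdot)$, $\Phi({}^tA_1;\cdot)$ and $s\circ p_0$ extend continuously, and the operator $Pg(y)=p_0(y)g(\Phi({}^tA_0;y))+p_1(y)g(\Phi({}^tA_1;y))$ is Feller. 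Also $M_n/n\to0$ still holds, because the conditional second moments of the increments are bounded by $\sup_{0\le p\le1}\bigl(p\log^2p+(1-p)\log^2(1-p)\bigr)<\infty$; no bounded increments are needed.

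\emph{Pathwise averaging.} Krylov--Bogolyubov in its usual form (Ces\`aro averages of the laws of $y_k$) gives only $\limsup_n n^{-1}\mathbb E[L_n]\le\theta^\ast$. Fatou then gives $\liminf_n L_n/n\le\theta^\ast$ on a set of positive, not full, $\mu_f$-measure. You need the pathwise version. For $g\in C([\alpha,\beta])$, the differences $g(y_{k+1})-Pg(y_k)$ are bounded martingale increments. Hence, almost surely, $n^{-1}\sum_{k<n}(g-Pg)(y_k)\to0$ for all $g$ in a countable dense set. So every weak-$*$ limit point of $n^{-1}\sum_{k<n}\delta_{y_k}$ is invariant, and $\limsup_n L_n/n\le\theta^\ast$ $\mu_f$-a.e.

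\emph{Uniform constant in Billingsley.} Carry $\theta^\ast$ into the last step. The pointwise conclusion $\liminf_n\frac{-\log R_n}{n\log2}<1$, as you wrote it, only gives level sets of dimension $<1$, and their union may have dimension $1$.
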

The one-dimensional manifold where $\mu_f$ is not singular is the orbit of $(\lambda,\gamma,q)=(2,2,2)$ under the action of the group $\{T_t\}_{t>0}$. This result, too, has a potential theoretic, equivalent counterpart.
\begin{corollary}\label{theoAnew}
    Suppose we are in the hypothesis of Theorem B (b). Then, there exist a Borel set $K$ in $[0,1]$ and $0\le s<1$ such that $\mu_f(K)=1$ and $\Cap_s(\Lambda^{-1}(K))=0$.
\end{corollary}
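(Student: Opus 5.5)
The plan is to derive Corollary \ref{theoAnew} directly from Theorem B(b), using the classical relation between Hausdorff dimension and dyadic Riesz capacities recalled in \S\ref{SSectRBc}. Theorem B(b) gives a Borel set $K_1\subseteq[0,1]$ with $\mu_f(K_1)=1$ and $d:=\dim_H(K_1)<1$. Choose any $s$ with $d<s<1$; for instance $s=(1+d)/2$. We will show that $\Cap_s(\Lambda^{-1}(K_1))=0$, so that $K=K_1$ works.

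The key step is the standard implication
\[
\mathcal{H}^s(E)=0 \;\Longrightarrow\; \Cap_s(E)=0 \qquad (E\subseteq\partial T_\ast,\ 0\le s<1),
\]
for the dyadic Riesz capacity $\Cap_s=\Cap_\ast^{2^s,2^s}$, where $\mathcal{H}^s$ is Hausdorff measure on $\partial T_\ast$ with respect to $\delta$. We first transfer the dimension bound to the tree. The map $\Lambda$ is $1$-Lipschitz from $(\partial T_\ast,\delta)$ to $[0,1]$, and it is at most two-to-one. Moreover, any interval of length $r\le 1$ is covered by at most three dyadic intervals of length comparable to $r$, and the $\Lambda$-preimage of each dyadic interval is a cylinder of $\delta$-diameter comparable to its length. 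Hence $\dim_H\Lambda^{-1}(K_1)=\dim_H K_1=d$, and in particular $\mathcal{H}^s(\Lambda^{-1}(K_1))=0$.

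For the capacity estimate, we cover $E=\Lambda^{-1}(K_1)$ by cylinders $\{\zeta:\zeta(n_i)=\mathrm a_i\}$ with $\sum_i 2^{-s n_i}<\varepsilon$. We may take the words $\mathrm a_i$ pairwise incomparable and all of length at least $N$, where $N$ is arbitrary, since dimension $<s$ allows covers by arbitrarily small cylinders. We then use the dual (energy) description of $\Cap_s$: if $\Cap_s(E)>0$, there is a positive measure $\nu$ on $E$ with $\nu(E)=\Cap_s(E)$ and dyadic potential
\[
\sum_{\alpha\in P_\ast(\zeta)}2^{s|\alpha|}\nu(\partial T_\alpha)\le 1 \quad\text{on } \operatorname{supp}\nu .
\]
In particular $\nu(\partial T_\alpha)\le 2^{-s|\alpha|}$ for every edge $\alpha$ charged by $\nu$. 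Summing this over the cover gives $\nu(E)\le\sum_i\nu(\partial T_{\mathrm a_i})\le\sum_i 2^{-s n_i}<\varepsilon$. Since $\varepsilon$ is arbitrary, $\nu(E)=0$, which is a contradiction.

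Alternatively, and more elementarily, we can test the definition of $\Cap_s$ with $h(\mathrm a_i)=2^{-s n_i}$ and $h=0$ elsewhere. This $h$ is admissible, because every $\zeta\in E$ passes through some $\mathrm a_i$, where $h(\mathrm a_i)2^{s n_i}=1$. Its cost is $\sum_i 2^{-2sn_i}2^{sn_i}=\sum_i 2^{-sn_i}<\varepsilon$. This bypasses duality entirely, and it is the route we will actually write.

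The main point needing care is therefore only the transfer of Hausdorff dimension across $\Lambda$ and the choice of the cover. In the elementary route no obstacle remains: the statement is, as the paper says, an equivalent reformulation of Theorem B(b) via the polarity/dimension dictionary of \S\ref{SSectRBc}. For the converse direction one would need Frostman's lemma, but the corollary does not require it.
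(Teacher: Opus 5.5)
Your proof is correct. It has the same skeleton as the paper's (take $K=K_1$ from Theorem B(b) and any $s\in(\dim_H K_1,1)$), but you establish the key implication by a different route. The paper just invokes the dictionary of \S\ref{SSectRBc}, $\dim_H E<s\Rightarrow\Cap_s(\Lambda^{-1}(E))=0$. That dictionary rests on two inputs: the classical polarity of sets of dimension $<1-2\tau$ for $\Qap_\tau$ (Falconer, Adams--Hedberg), and the comparability \eqref{eqCapDnonD} of dyadic and classical Riesz capacities. You instead work directly on the tree. A cylinder cover $\{\partial T_{\mathrm a_i}\}$ of $\Lambda^{-1}(K_1)$ yields the admissible function $h(\mathrm a_i)=2^{-s|\mathrm a_i|}$, so $\Cap_s$ is dominated by the dyadic $s$-dimensional Hausdorff content, which vanishes because $\mathcal H^s(K_1)=0$. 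This argument is self-contained: it uses no duality, no capacitability and no continuous potential theory, and it gives a quantitative bound. The paper's route is a one-line citation, but it also supplies the converse direction ($\dim_H E>s\Rightarrow\Cap_s>0$), which is what makes \eqref{eqSynthTwo} genuinely equivalent to \eqref{eqSynth}. Your covering bound works here only because the corollary has slack ($s>\dim_H K_1$). At the critical exponent it gives $\Cap_s\lesssim\mathcal H^s<\infty$, not $0$, so it could not replace \eqref{eqMissing} in the proof of Theorem \ref{theoMain}. There is one small imprecision: $\Lambda^{-1}$ of a closed dyadic interval is its cylinder plus at most two extra points (the alternative expansions of the endpoints), which lie in the adjacent cylinders of the same generation. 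Cover it by three cylinders instead; this only changes constants. You were right to drop the dual sketch, since it would need an equilibrium measure for a possibly non-compact Borel set.
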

Corollary \ref{theoAnew} follows from Theorem B ike \eqref{eqSynthTwo} follows from Theorem A.

Theorem \ref{theoMain} is in the same spirit of \cite{AC2022}, where a different statement in terms of Hausdorff dimension is sharpened by one expressed in terms of capacity. Actually, it would be possible to prove the potential theoretic \eqref{eqSynthTwo} and Corollary \ref{theoAnew} first, then deducing Theorems A and B from them and from the relations between Hausdorff dimension and polarity of sets.

\vskip0.5cm

The proofs of the theorems will be given in Section \ref{SectProof}, together with other results which might have an independent interest. Several of the main lemmas of \cite{Okamura2014} will be used. In Section \ref{SectFR} we will better analyze the state of the art for different ranges of the data $(F_0,F_1)$, which also leads to an effortless improvement of the thesis of Theorems A and \ref{theoMain}. We also indicate some directions for further inquiry.

In this article we have studied the de Rham systems from \cite{Okamura2014} in some generality.
Several extensions and generalizations could be considered. Dyadic capacities and Riesz capacities can be defined in a vast class of metric spaces, for linear as well as nonlinear potential theories \cite{ARSW2014}. Dyadic capacities satisfy recursive relations which, under some self-similarity assumptions on the involved weights, lead to a multiplicity of de Rham systems. We will return on some of these in a subsequent article. 

\vskip0.5cm

\noindent{\bf Methodological note.}
A generative AI assistant was used during the preparation of this
manuscript for routine algebraic manipulations, numerical and symbolic
checks, bibliographic and terminological queries, and for drafting
preliminary versions of some arguments from proof sketches provided by
the author. All mathematical statements, calculations, proofs, and
bibliographic claims appearing in the paper were subsequently checked,
revised, and are the sole responsibility of the author.

\section{Proofs of the main results}\label{SectProof}

\subsection{The proof of Theorem \ref{theodeRhamExists}}
We give a proof which can be easily modified to obtain similar statements for more general IFS. 
The point of Lemma~\ref{lemmaTransitionRevised} below is that the relevant
contraction is not attached to the individual maps, as in the classical
IFS theorem of Hutchinson~\cite{Hutchinson1981}, but to the occurrence
of transitions in the symbolic itinerary.  This also differs from the weak contraction setting \cite{Hata1985} and from the
general weakly hyperbolic setting of Edalat~\cite{Edalat1996}, and is
adapted to the monotone projective systems considered here.

Let $\partial_{\mathrm{nd}} T_\ast$ be the set of those
$\zeta=(e_n)_{n=0}^\infty$ in $\partial T_\ast$ whose entries are not
eventually constant. Then the restriction of $\Lambda$ to
$\partial_{\mathrm{nd}}T_\ast$ is a homeomorphism onto the set of
non-dyadic points of $(0,1)$. When dealing with non-dyadic points, then, we can work on the symbolic space $\partial_{\mathrm{nd}} T_\ast$ instead of $[0,1]_{\mathrm{nd}}$.

\begin{lemma}[Transition lemma]\label{lemmaTransitionRevised}
Let \((X,d)\) be a complete metric space. Let \(F_0,F_1:X\to X\) be
non-expansive, that is
\[
        d(F_j(x),F_j(y))\le d(x,y),
        \qquad j=0,1.
\]
Assume that there exists \(0<c<1\) such that
\[
        \Lip(F_0\circ F_1)\le c,
        \text{ or }
        \Lip(F_1\circ F_0)\le c.
\]
Assume moreover that one of the images has finite diameter:
\[
        D:=\min\bigl\{
        \diam (F_0\circ F_1)(X),
        \diam (F_1\circ F_0)(X)
        \bigr\}<\infty.
\]
Then, for every \(\omega\in\partial_{\mathrm{nd}}T_\ast\), the limit
\[
        \pi(\omega)
        =
        \lim_{n\to\infty}
        F_{\omega_1}\circ\cdots\circ F_{\omega_n}(y)
\]
exists and is independent of \(y\in X\). Moreover, the map
\[
        \pi:\partial_{\mathrm{nd}}T_\ast\to X
\]
is continuous and satisfies
\[
        \pi(j\omega)=F_j(\pi(\omega)),
        \qquad j=0,1.
\]
\end{lemma}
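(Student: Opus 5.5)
Without loss of generality I assume $\Lip(F_0\circ F_1)\le c$; the other case is symmetric after swapping the roles of $0$ and $1$. For $\omega\in\partial_{\mathrm{nd}}T_\ast$ and $n\ge1$ write $G_n=F_{\omega_1}\circ\cdots\circ F_{\omega_n}$. Since $\omega$ is not eventually constant, it contains infinitely many transitions, and the key observation is that it contains infinitely many \emph{disjoint} occurrences of the block ``$01$''. Indeed, after any position there is a later $0$ (otherwise $\omega$ is eventually $1$). After that $0$ there is a later $1$ (otherwise it is eventually $0$). The first $1$ after the last $0$ preceding it gives an adjacent pair $\omega_k\omega_{k+1}=01$.

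Let $N(n)$ be the maximal number of disjoint $01$-blocks inside $\omega_1\dots\omega_n$; then $N(n)\to\infty$. Factor $G_n$ into the pieces $F_0\circ F_1$ (at the chosen blocks) and single maps $F_j$. Non-expansiveness gives $\Lip(G_n)\le c^{N(n)}$.

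To obtain a Cauchy sequence, I would use the finite diameter together with the nesting $G_{m}(X)\subseteq G_n(X)$ for $m\ge n$, which holds since $G_m=G_n\circ F_{\omega_{n+1}}\circ\cdots\circ F_{\omega_m}$. The one point needing care is that $D$ is the smaller of the two diameters, and this need not be the one attached to $F_0\circ F_1$. So I take $n$ such that the word $\omega_1\dots\omega_n$ contains $N$ disjoint $01$-blocks, followed by a block $H$ equal to whichever of $F_0\circ F_1$ or $F_1\circ F_0$ has diameter $D$. Both $01$ and $10$ occur infinitely often in $\omega$, so such $n$ exist. Then $G_m(X)\subseteq G_n(X)=G'\bigl(H(X)\bigr)$ for every $m\ge n$, where $G'$ is the composition preceding $H$, with $\Lip(G')\le c^N$. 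Hence
\[
\diam G_m(X)\le c^{N}D .
\]
Thus $G_n(y)$ lies in a nested family of sets whose diameters tend to $0$, and since $X$ is complete the sequence is Cauchy. Its limit $\pi(\omega)$ does not depend on $y$, because all the points $G_n(y)$ lie in the same small set $G_n(X)$. I regard this diameter bookkeeping as the main obstacle: the point is that the contraction is attached to transitions, not to individual maps, and the finite-diameter hypothesis must be paired with the correct block order.

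The functional equation follows by continuity of $F_j$:
\[
\pi(j\omega)=\lim_n F_j\bigl(G_n(y)\bigr)=F_j\bigl(\pi(\omega)\bigr).
\]
Here I use that $j\omega$ is again in $\partial_{\mathrm{nd}}T_\ast$, and that $F_j$ is $1$-Lipschitz, hence continuous.

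For continuity of $\pi$, fix $\omega$ and $\varepsilon>0$. Choose $n$ as above with $c^N D<\varepsilon$. If $\omega'\in\partial_{\mathrm{nd}}T_\ast$ agrees with $\omega$ in its first $n$ letters, then both $\pi(\omega)$ and $\pi(\omega')$ lie in the closure of the same set $G_n(X)$, whose diameter is at most $\varepsilon$. The set of such $\omega'$ is a cylinder, which is a neighbourhood of $\omega$ for the metric $\delta$. Therefore $\pi$ is continuous at $\omega$.
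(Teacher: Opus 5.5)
Your proof is correct and follows the same route as the paper: nested images $G_n(X)$ whose diameters are driven to zero by the contracting transition blocks, completeness of $X$ for the limit and its independence of $y$, continuity of $F_j$ for the functional equation, and cylinder neighbourhoods for continuity of $\pi$. Your bookkeeping is in fact slightly more careful than the paper's: placing the $N$ disjoint $01$-blocks to the left of (i.e.\ applied after) the block $H$ realizing $D$ is exactly what gives $\diam G_n(X)\le c^N D$ for $G_n=F_{\omega_1}\circ\cdots\circ F_{\omega_n}$, whereas the paper counts $01$-transitions occurring after the first $10$-transition, which in that composition order are applied before it.
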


\begin{proof} We can assume that $\Lip(F_0\circ F_1)\le c<1$ and that $\diam (F_1\circ F_0)(X)=D<\infty$, the other cases being dealt with the same way.
For a finite word \(w=\ast e_1\dots e_n\), set
\[
        F_w=F_{e_1}\circ\cdots\circ F_{e_n}.
\]
A $01$-transition in \(w\) is a pair of consecutive indices \((k,k+1)\), with
\(k\ge1\), such that
\[
        e_k=0, e_{k+1}=1,
\]
and a $10$-transition is similarly defined. 
We denote by \(N(w)\) the number of $01$-transitions in \(w\) which occur after the first $10$-transition. If $N(w)\ge1$,
Since $F_0$ and $F_1$ are non-expansive, 
\begin{equation}\label{eq:diam-transition-estimate-revised}
        \diam F_w(X)\le c^{N(w)-1}D:
\end{equation}
The first $10$-transition makes the diameter no more that $D$, we do not count the first $01$-transition because we might have a sequence $101$, but all subsequent $01$-transitions do not have digits in common, hence each of them contributes a factor $c$.

Now fix
\[
        \omega=(\ast,e_1,e_2,\ldots)\in\partial_{\mathrm{nd}}T_\ast
\]
and set
\[
        w_n=\ast e_1\dots e_n.
\]
Since \(\omega\) is not eventually constant, the number \(N(w_n)\) tends
to infinity. Hence, by
\eqref{eq:diam-transition-estimate-revised},
\[
        \diam F_{w_n}(X)\longrightarrow0.
\]

Fix \(x\in X\) and put
\[
        x_n=F_{w_n}(x).
\]
If \(m\ge n\), then
\[
        F_{w_m}(X)\subseteq F_{w_n}(X),
\]
because \(F_{w_m}\) factors through \(F_{w_n}\). Hence
\(x_m,x_n\in F_{w_n}(X)\), and therefore
\[
        d(x_m,x_n)\le \diam F_{w_n}(X)\longrightarrow0.
\]
Thus \((x_n)\) is Cauchy. Since \(X\) is complete, \((x_n)\) converges.

Moreover, if \(x,y\in X\), then
\[
        d(F_{w_n}(x),F_{w_n}(y))
        \le \diam F_{w_n}(X)\longrightarrow0.
\]
Thus the limit does not depend on the initial point. We define
\[
        \pi(\omega):=\lim_{n\to\infty}F_{w_n}(x),
\]
where \(x\in X\) is arbitrary.

Since each \(F_j\) is continuous, being non-expansive, we have
\[
\begin{aligned}
F_j(\pi(\omega))
&=
F_j\left(\lim_{n\to\infty}F_{w_n}(x)\right)  \\
&=
\lim_{n\to\infty}F_j(F_{w_n}(x))              \\
&=
\lim_{n\to\infty}F_{jw_n}(x)
=
\pi(j\omega).
\end{aligned}
\]

It remains to prove continuity. Fix
\(\omega\in\partial_{\mathrm{nd}}T_\ast\) and \(\varepsilon>0\). Choose
\(n\) such that
\[
        \diam F_{\omega_1\ldots\omega_n}(X)<\varepsilon.
\]
If \(\eta\in\partial_{\mathrm{nd}}T_\ast\) agrees with \(\omega\) in the
first \(n\) digits, then both \(\pi(\omega)\) and \(\pi(\eta)\) belong
to the closure of \(F_{\omega_1\ldots\omega_n}(X)\). Since the closure
has the same diameter,
\[
        d(\pi(\omega),\pi(\eta))
        \le
        \diam F_{\omega_1\ldots\omega_n}(X)
        <\varepsilon.
\]
This proves continuity.
\end{proof}

\begin{corollary}\label{corTransition}
With the hypotheses of Lemma~\ref{lemmaTransitionRevised}, assume
moreover that \(X=[0,1]\), that \(F_0,F_1\) are strictly increasing,
and that
\[
        F_0(1)=F_1(0).
\]
Then \(\pi\) is strictly increasing on the non-dyadic part of
\([0,1]\), with respect to the usual order.
\end{corollary}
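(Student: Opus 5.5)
We read the statement as follows: the map \(x\mapsto\pi(\Lambda|_{\partial_{\mathrm{nd}}T_\ast}^{-1}(x))\) on non-dyadic \(x\in(0,1)\) is strictly increasing. Equivalently, if \(\omega,\eta\in\partial_{\mathrm{nd}}T_\ast\) satisfy \(\Lambda(\omega)<\Lambda(\eta)\), then \(\pi(\omega)<\pi(\eta)\).

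The plan is to compare two itineraries at their first disagreement and reduce everything to a single inequality. Since both sequences are not eventually constant, \(\Lambda(\omega)<\Lambda(\eta)\) holds exactly when, at the first index \(k\) where they differ, \(\omega_k=0\) and \(\eta_k=1\). Let \(u=\ast\omega_1\dots\omega_{k-1}\) be the common prefix, and write \(\omega=u0\omega'\), \(\eta=u1\eta'\) with \(\omega',\eta'\in\partial_{\mathrm{nd}}T_\ast\). Iterating the relation \(\pi(j\omega)=F_j(\pi(\omega))\) from Lemma~\ref{lemmaTransitionRevised} gives
\[
\pi(\omega)=F_u\bigl(F_0(\pi(\omega'))\bigr),\qquad \pi(\eta)=F_u\bigl(F_1(\pi(\eta'))\bigr).
\]
A composition of strictly increasing maps is strictly increasing, so \(F_u\) is. It therefore suffices to show
\[
F_0(\pi(\omega'))<F_1(\pi(\eta')).
\]

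Monotonicity gives \(F_0(\pi(\omega'))\le F_0(1)=F_1(0)\le F_1(\pi(\eta'))\). For strictness, it is enough to show that \(\pi\) takes values in the open interval \((0,1)\) on \(\partial_{\mathrm{nd}}T_\ast\). Then \(F_0(\pi(\omega'))<F_0(1)\) and \(F_1(\pi(\eta'))>F_1(0)\), because \(F_0\) and \(F_1\) are strictly increasing.

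The main obstacle is this interior claim, \(0<\pi(\zeta)<1\) for every non-eventually-constant \(\zeta\). Take \(\zeta\), and let \(m\) be its first index with \(\zeta_m=1\), which exists since \(\zeta\) is not eventually \(0\). Then
\[
\pi(\zeta)=F_0^{m-1}\bigl(F_1(\pi(\zeta''))\bigr)\ \ge\ F_0^{m-1}(F_1(0)).
\]
We need \(F_1(0)>0\), and we also need \(F_0^{m-1}\) to map \((0,1]\) into \((0,1]\). For the latter, note that a strictly increasing map \(F_0\) with \(F_0(0)=0\) sends positive points to positive points. For the former, \(F_1(0)=F_0(1)>F_0(0)\geq 0\) by strict monotonicity. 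This inequality alone does not force \(F_0(0)=0\), but \(F_0(0)\ge0\) holds anyway. If \(F_0(0)>0\) the estimate only becomes easier, since \(F_0\) is increasing and maps into \([0,1]\). So \(\pi(\zeta)>0\) in every case. Symmetrically, using the first index with \(\zeta_m=0\) and \(F_0(1)=F_1(0)<F_1(1)\le1\), we get \(\pi(\zeta)<1\).

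Substituting the interior claim back into the reduction gives strict monotonicity. Since \(\Lambda\) restricted to \(\partial_{\mathrm{nd}}T_\ast\) is an order-preserving homeomorphism onto the non-dyadic points, the statement transfers directly to \([0,1]\).
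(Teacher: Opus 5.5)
Your proof is correct and follows essentially the paper's route: reduce to the first differing digit using strict monotonicity of the common-prefix composition $F_u$, then use $F_0(1)=F_1(0)$ together with the fact that a non-dyadic tail is never sent to an endpoint of $[0,1]$. The paper does this only on one side, bounding $\pi(x)\le F_0(F_1^{k-1}(0))<F_0(1)$, which is exactly your claim $\pi(\zeta)<1$ applied to the tail $\omega'$; your lower bound $\pi(\zeta)>0$ is also correct but not needed.
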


\begin{proof}
It suffices to show that \(\pi(x)<\pi(y)\) whenever
\[
        x=\sum_{n=1}^\infty \frac{d_n}{2^n}
        <
        y=\sum_{n=1}^\infty \frac{e_n}{2^n}
\]
are non-dyadic points. Let \(m\ge0\) be the first place at which the
binary expansions differ, so that
\[
        d_1=e_1,\ldots,d_m=e_m,
        \qquad
        d_{m+1}=0,
        \qquad
        e_{m+1}=1.
\]
Since the finite composition
\[
        F_{d_1}\circ\cdots\circ F_{d_m}
\]
is strictly increasing, it is enough to consider the case \(m=0\).

Thus \(x\) begins with \(0\), while \(y\) begins with \(1\). Since \(x\)
is not dyadic, its binary expansion is not eventually equal to \(1\).
Hence there exists \(k\ge2\) such that
\[
        d_2=\cdots=d_{k-1}=1,
        \qquad
        d_k=0.
\]
Define, for non-dyadic binary expansions,
\[
        \check\sigma\left(\sum_{n=1}^\infty \frac{d_n}{2^n}\right)
        =
        \sum_{n=1}^\infty \frac{d_{n+1}}{2^n}.
\]
Then \(\check\sigma\) is the symbolic shift, written in real
coordinates. By the functional relation for \(\pi\),
\[
        \pi(x)
        =
        (F_0\circ F_1^{k-2}\circ F_0)
        \bigl(\pi(\check\sigma^k x)\bigr).
\]
Since \(\pi(\check\sigma^k x)\le1\), and since the maps are increasing,
\[
\begin{aligned}
\pi(x)
&\le
F_0\left(F_1^{k-2}(F_0(1))\right)  \\
&=
F_0\left(F_1^{k-2}(F_1(0))\right)  \\
&=
F_0(F_1^{k-1}(0)).
\end{aligned}
\]
Since \(F_1\) is strictly increasing and \(F_1(1)=1\), we have
\[
        F_1^{k-1}(0)<1.
\]
Therefore
\[
        \pi(x)
        \le
        F_0(F_1^{k-1}(0))
        <
        F_0(1)
        =
        F_1(0).
\]
On the other hand, since \(y\) begins with \(1\),
\[
        \pi(y)=F_1(\pi(\check\sigma y))\ge F_1(0).
\]
Hence
\[
        \pi(x)<\pi(y).
\]
\end{proof}
We will use the Schwarz-Pick lemma and some of its consequences concerning the hyperbolic metric in the complex unit disc, which can be found in the first chapter of \cite{Ah1973}.
\begin{proof}[Proof of Theorem \ref{theodeRhamExists}] 
We indicate how Lemma~\ref{lemmaTransitionRevised} applies to the
fractional-linear system in Theorem~\ref{theodeRhamExists}.

For comparison with the Schwarz--Pick lemma, conjugate \([0,1]\) to
\([-1,1]\) by
\[
        \varphi(z)=2z-1.
\]
Set
\[
        G_j=\varphi\circ F_j\circ\varphi^{-1},
        \qquad j=0,1.
\]
Then \(G_0,G_1\) are increasing fractional-linear transformations of
\([-1,1]\) into itself, satisfying
\[
        G_0(-1)=-1,
        \qquad
        G_1(1)=1,
        \qquad
        G_0(1)=G_1(-1)\in(-1,1).
\]
Moreover,
\[
        G_j([-1,1])\subseteq[-1,1],
        \qquad j=0,1.
\]

The maps \(G_j\) extend to fractional-linear self-maps of the unit disc
\[
        \mathbb D=\{w\in\mathbb C: |w|<1\}.
\]
Since fractional-linear maps send straight lines and circles to straight lines and
circles, the mixed maps \(G_0\circ G_1\) and \(G_1\circ G_0\) map
\(\mathbb D\) into discs whose closures are compactly contained in
\(\mathbb D\). 
Indeed, a real fractional-linear map sending \([-1,1]\) onto
\([a,b]\subset[-1,1]\) maps \(\partial\mathbb D\) onto the circle with
diameter \([a,b]\), hence maps \(\mathbb D\) into itself. For the mixed
maps the corresponding intervals are compactly contained in \((-1,1)\),
so their image disks are compactly contained in \(\mathbb D\).
Thus there exists a compact set \(K\Subset\mathbb D\)
such that
\[
        (G_0\circ G_1)(\mathbb D)
        \cup
        (G_1\circ G_0)(\mathbb D)
        \subseteq K.
\]

Let \(d_{\mathbb D}\) be the hyperbolic distance associated with
\[
        ds^2=\frac{4|dw|^2}{(1-|w|^2)^2}.
\]
By the Schwarz--Pick lemma, \(G_0\) and \(G_1\) are non-expansive for
\(d_{\mathbb D}\). Since the mixed images are contained in
\(K\Subset\mathbb D\), the equality case in Schwarz--Pick gives a
strict contraction estimate: there exists \(0<c<1\) such that
\[
        d_{\mathbb D}((G_0\circ G_1)(w_1),(G_0\circ G_1)(w_2))
        \le
        c\,d_{\mathbb D}(w_1,w_2),
\]
and
\[
        d_{\mathbb D}((G_1\circ G_0)(w_1),(G_1\circ G_0)(w_2))
        \le
        c\,d_{\mathbb D}(w_1,w_2).
\]
After conjugating back to \([0,1]\), Lemma~\ref{lemmaTransitionRevised}
and Corollary~\ref{corTransition} give a strictly increasing
continuous solution on the non-dyadic symbolic part.

We also record uniqueness. Let \(h\) be any strictly increasing,
left-continuous solution of \eqref{eqDR}. If \(x\in[0,1]\) is not
dyadic and has binary expansion
\[
        x=0.e_1e_2e_3\ldots,
\]
then, by iterating the functional equation, for every \(n\) we may write
\[
        h(x)
        =
        F_{e_1}\circ\cdots\circ F_{e_n}(y_n)
\]
for some \(y_n\in[0,1]\). By the transition lemma, the limit of the
right-hand side is independent of the choice of \(y_n\). Hence \(h(x)\)
coincides with the function constructed above at every non-dyadic point.
Since both functions are left continuous, they coincide everywhere on
\([0,1]\).

It remains to determine when the left-continuous solution is continuous
at the dyadic points. Every dyadic point different from \(0\) and \(1\)
has the form
\[
        g_w(1/2)
\]
for some finite word \(w\). The two one-sided limits of \(f\) at
\(g_w(1/2)\) are obtained by applying the strictly increasing map
\(F_w\) to the two one-sided limits at \(1/2\). Hence it suffices to
decide continuity at \(1/2\).

Writing \(A_0,A_1\) in the linear parameters of \eqref{eqLinPar}, one
obtains, by induction,
\begin{equation}\label{eqFzeroItRevised}
        \zeta_0
        :=
        \lim_{n\to\infty}F_0^n(x)
        =
        \begin{cases}
            0,
            & \lambda\ge1,\\[0.4em]
            \dfrac{1-\lambda}{\gamma-\lambda},
            & 0<\lambda<1.
        \end{cases}
\end{equation}
Similarly,
\begin{equation}\label{eqFoneItRevised}
        \zeta_1
        :=
        \lim_{n\to\infty}F_1^n(x)
        =
        \begin{cases}
            1,
            & \gamma\le q+1,\\[0.4em]
            \dfrac{1}{\gamma-q},
            & \gamma>q+1.
        \end{cases}
\end{equation}
In the latter case, the limit can be verified by considering that the graph of \(F_1\) crosses the diagonal at
\[
        x=\frac{1}{\gamma-q},
\]
and by drawing cobweb diagrams.

The solution is continuous at \(1/2\) if and only if the two one-sided
limits agree, that is,
\[
        F_1(\zeta_0)=F_0(\zeta_1).
\]
By conditions (A1) and (A2), this happens if and only if
\[
        \zeta_0=0
        \qquad\text{and}\qquad
        \zeta_1=1.
\]
Equivalently,
\[
        \lambda\ge1,
        \qquad
        q\ge\gamma-1.
\]
These are precisely the parameters in
\[
        \mathcal L_{\mathrm{tr}}
        =
        \{(\lambda,\gamma,q):\lambda\ge1,\ q\ge\gamma-1\}.
\]
\end{proof}

\subsection{Dyadic potential theory and the proof of Theorem \ref{theodrCap}}

The definition of capacity given in \eqref{eq:cap-def} is a special case of the axiomatic definition one finds in \cite{AH1996} \S2.3. For $a\in E(T_\ast)$, it will be convenient to denote by $T_a$ the subtree of $T_\ast$ having $a$ as root-edge: the edges of $T_a$ have the form $ae_1\dots e_m$ with $e_1,\dots, e_m\in\{0,1\}$ (we might think of this in terms of back-shifts on $T_\ast$). Consider the kernel
\[
E(T_\ast)\times\partial T_\ast\xrightarrow{\kappa_\ast}[0,\infty],\ \kappa_\ast(a,\zeta)=\chi_{P_\ast(\zeta)}(a)=\chi_{\partial T_a}(\zeta)=\begin{cases}
    1\text{ if }a\in P_\ast(\zeta),\\
    0\text{ if }a\notin P_\ast(\zeta)
\end{cases}.
\]
We consider $\partial T_\ast$ as a topological space, and we consider $E(T_\ast)$ endowed with the measure $\mathrm{w}$. 
For $E(T_\ast)\xrightarrow{\varphi}(0,\infty)$, we let $\kappa_\ast\varphi(\zeta)=\sum_{a\in P_\ast(\zeta)}\varphi(a)\mathrm{w}(a)$.
Then, our definition might be phrased as
\[
\Cap_\ast^{\mathrm{w}}(E)=\inf\{\|\varphi\|_{L^2(\mathrm{w})}^2:\varphi\ge0,\kappa_\ast\varphi\ge1\text{ on }E \}.
\]
For a positive Borel measure $\mu$ on $\partial T_\ast$ and $a\in E(T_\ast)$, let
\[
\check{\kappa}_\ast\mu(a):=\int_{\partial T_\ast}\kappa_\ast(a,\zeta)d\mu(\zeta)=\mu(\partial T_a).
\]
In \S2.5 of \cite{AH1996} it is proved that, if $E\subseteq\partial T_\ast$ is a Borel set, then we have a dual characterization of $\Cap_\ast^{\mathrm{w}}(E)$,
\begin{equation}\label{eqCapDual}
    \Cap_\ast^{\mathrm{w}}(E)=\sup_{\text{supp}(\mu)\subseteq E}\frac{\mu(E)^2}{\mathcal{E}_\ast^{\mathrm{w}}(\mu)}=\sup_{\text{supp}(\nu)\subseteq E,\nu(E)=1}\frac{1}{\mathcal{E}_\ast^{\mathrm{w}}(\nu)},
\end{equation}
where 
\[
\mathcal{E}_\ast^{\mathrm{w}}(\mu)=\sum_{a\in E(T_\ast)}(\check{\kappa}_\ast\mu)^2(a)\mathrm{w}(a)=\sum_{a\in E(T_\ast)}\mu(\partial T_a)^2\mathrm{w}(a).
\]
is the {\it $\mathrm{w}$-energy} of $\mu$.

If $a$ is an edge of $T_\ast$ and $E\subseteq\partial T_a\subseteq\partial T_\ast$, we can define its $\mathrm{w}$-capacity $\Cap_a^{\mathrm{w}}(E)$ in $T_a$ by restricting the weight $\mathrm{w}$ to $E(T_a)$. We can identify $\partial T_a$ with a subset of $\partial T_\ast$.  A peculiar feature of dyadic potential theory is that a recursive formula allows to reconstruct $\Cap_a^\mathrm{w}(E)$ from $\Cap_{a0}^\mathrm{w}(E\cap\partial T_{a0})$ and $\Cap_{a1}^\mathrm{w}(E\cap\partial T_{a1})$,
\begin{equation}\label{eqCapRecursion}
    \Cap^\wt_{\mathrm{a}}(E)=\frac{\Cap^\wt_{\mathrm{a0}}(E\cap\partial T_{a0})+\Cap^\wt_{\mathrm{a1}}(E\cap\partial T_{a1})}{1+\wt(a)[\Cap^\wt_{\mathrm{a0}}(E\cap\partial T_{a0})+\Cap^\wt_{\mathrm{a1}}(E\cap\partial T_{a1})]}.
\end{equation}
See e.g. Theorem 30 in \cite{ARSW2014}, for a general formulation including nonlinear potentials, but also \cite[Chapter 16]{LP2016}, or \cite[Chapter V, \S6, "Capacity of the boundary of a tree"]{S1994}. We mention here that \eqref{eqCapRecursion} provides an algorithm to compute the dyadic capacity of a set in terms of a {\it branched continued fraction}. To the best of my knowledge, the relation between branched continued
fractions, potential theory on trees, and iterated function systems has
not been systematically explored. It would be interesting to investigate
this connection further. 
See the recent survey \cite{BoK2018} and its bibliography.

We will also use that capacities scale with the weight in an obvious way,
\begin{equation}\label{eqReascale}
        \Cap_\ast^{c \mathrm{w}}(E)=c^{-1}\Cap_\ast^{\mathrm{w}}(E).
\end{equation}
Consider the back-shifts $E(T_\ast)\xrightarrow{\tau_j}E(T_{j})$, $\tau_j(\ast  e_1\dots e_n)=(\ast je_1\dots e_n)$. We have that
\begin{equation}\label{eqBScomm}
    g_j\circ\Lambda=\Lambda\circ \tau_j.
\end{equation}
The recursive relation \eqref{eqCapRecursion} implies that for Borel subsets $E,F\subseteq\partial T_\ast$, $\lambda,\mu>0$, and $\wt=\wt^{\lambda,\mu}$, we have the following (the reader is urged to draw a picture to verify  the obvious geometric content of the calculation):
\begin{equation}\label{eqRicorso}
\begin{aligned}
\Cap^{\mathrm{w}^{\lambda,\mu}}_\ast(\tau_0(E)\cup\tau_1(F))&=\frac{\Cap_{\ast0}^{\mathrm{w}^{\lambda,\mu}}(\tau_0(E))+\Cap_{\ast1}^{\mathrm{w}^{\lambda,\mu}}(\tau_1(F))}{1+\Cap_{\ast0}^{\mathrm{w}^{\lambda,\mu}}(\tau_0(E))+\Cap_{\ast1}^{\mathrm{w}^{\lambda,\mu}}(\tau_1(F))}\\
&=\frac{\frac{\Cap_{\ast0}^{\frac{\mathrm{w}^{\lambda,\mu}}{\lambda}}(\tau_0(E))}{\lambda}+\frac{\Cap_{\ast1}^{\frac{\mathrm{w}^{\lambda,\mu}}{\mu}}(\tau_1(F))}{\mu}}{1+\frac{\Cap_{\ast0}^{\frac{\mathrm{w}^{\lambda,\mu}}{\lambda}}(\tau_0(E))}{\lambda}+\frac{\Cap_{\ast1}^{\frac{\mathrm{w}^{\lambda,\mu}}{\mu}}(\tau_1(F))}{\mu}}\\
&=\frac{\frac{\Cap_\ast^{\mathrm{w}^{\lambda,\mu}}(E)}{\lambda}+\frac{\Cap_\ast^{\mathrm{w}^{\lambda,\mu}}(F)}{\mu}}{1+\frac{\Cap_\ast^{\mathrm{w}^{\lambda,\mu}}(E)}{\lambda}+\frac{\Cap_\ast^{\mathrm{w}^{\lambda,\mu}}(F)}{\mu}}.
\end{aligned}
\end{equation}
\begin{proposition}\label{propTotCap}
    Let $\lambda,\mu>0$. Then, $\Cap_\ast^{\lambda,\mu}(\partial T_\ast)$ is given by \eqref{eqTotCap},
    \[
    C:=\Cap_\ast^{\lambda,\mu}(\partial T_\ast)=\begin{cases}
        \frac{\lambda+\mu-\lambda\mu}{\lambda+\mu}\text{ if }\lambda+\mu-\lambda\mu>0,\\
        0\text{ if }\lambda+\mu-\lambda\mu\le0.\end{cases}
    \]
\end{proposition}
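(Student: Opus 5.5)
The plan is to combine the recursive identity \eqref{eqRicorso} with an approximation of $\partial T_\ast$ by finite-depth trees, and then to read off $C$ as the limit of a one-dimensional iteration. Put
\[
k=\frac1\lambda+\frac1\mu=\frac{\lambda+\mu}{\lambda\mu},\qquad \phi(x)=\frac{kx}{1+kx},\quad x\ge0 .
\]
Taking $E=F=\partial T_\ast$ in \eqref{eqRicorso} gives $C=\phi(C)$. However, $\phi$ has two fixed points in $[0,\infty)$: the point $0$, and the point $x^\ast=1-1/k=\frac{\lambda+\mu-\lambda\mu}{\lambda+\mu}$ whenever $k>1$. The real work is therefore to decide which fixed point is the capacity. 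I will do this by obtaining $C$ as a limit of iterates of $\phi$ started at a known value.

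For $n\ge0$, let $C_n$ be the capacity of $\partial T_\ast$ for the depth-$n$ problem. This is \eqref{eq:cap-def} with $\wt=\wt^{\lambda,\mu}$, but the constraint is replaced by $\sum_{a\in P_\ast(\zeta),\,|a|\le n} f(a)\wt(a)\ge1$ for every $\zeta$, and $f$ is supported on edges of length at most $n$.

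\begin{itemize}
\item For $n=0$ only the root edge is available, with $\wt(\ast)=1$, so $C_0=1$.
\item The derivation of \eqref{eqCapRecursion} and \eqref{eqRicorso} works verbatim on finite trees, because it only uses the splitting at the root and the rescaling \eqref{eqReascale}. This gives $C_{n+1}=\phi(C_n)$.
\end{itemize}

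Next I must show $C_n\to C$. Every depth-$n$ admissible function is admissible for the full problem, so $C\le C_n$. For the reverse inequality, take an admissible $f$ for the full problem with finite cost and fix $\varepsilon>0$. The partial sums $\zeta\mapsto\sum_{a\in P_\ast(\zeta),|a|\le n}f(a)\wt(a)$ are continuous on the compact space $\partial T_\ast$, since they are locally constant. They increase in $n$ to a limit that is $\ge1$. By a Dini-type compactness argument, the open sets where the partial sum exceeds $(1+\varepsilon)^{-1}$ increase and cover $\partial T_\ast$, so one of them is everything. Hence $(1+\varepsilon)f$, truncated at some depth $n$, is admissible for the depth-$n$ problem. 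This gives $C_n\le(1+\varepsilon)^2\sum f^2\wt$, and so $\lim C_n=C$. I expect this interchange of limits to be the main obstacle, mild as it is; everything else is elementary.

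Finally I analyse the iteration $C_{n+1}=\phi(C_n)$ with $C_0=1$. The map $\phi$ is increasing and concave, with $\phi(0)=0$, $\phi'(0)=k$, and $\phi(1)=k/(1+k)<1$. Since $C_1<C_0$ and $\phi$ is increasing, $(C_n)$ is decreasing. Its limit is the largest fixed point of $\phi$ in $[0,1]$.

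\begin{itemize}
\item If $k>1$, i.e.\ $\lambda+\mu-\lambda\mu>0$, then $x^\ast\in(0,1)$. Moreover $\phi(x)<x$ on $(x^\ast,1]$ and $\phi(x)>x^\ast$ for $x>x^\ast$, so $C_n\downarrow x^\ast$.
\item If $k\le1$, then $\phi(x)<x$ for all $x>0$, and the only fixed point is $0$, so $C_n\downarrow0$.
\end{itemize}

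Together with $C_n\to C$, this yields exactly \eqref{eqTotCap}.
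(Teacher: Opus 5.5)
Your proof is correct, but it takes a genuinely different route from the paper.

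\textbf{The paper's route.} It takes the infinite-tree recursion \eqref{eqRicorso} as given, which confines $C$ to the two roots $0$ and $C_2=\frac{\lambda+\mu-\lambda\mu}{\lambda+\mu}$ of $C=\phi(C)$. The case $C_2\le0$ is then settled by nonnegativity alone. When $C_2>0$, the paper selects the nonzero root through the dual characterization \eqref{eqCapDual}. The Bernoulli measure with $\nu(\partial T_a)=\rho_0^{|a|_0}\rho_1^{|a|_1}$, where $\rho_0=\frac{\mu}{\lambda+\mu}$ and $\rho_1=\frac{\lambda}{\lambda+\mu}$, has energy $\sum_n\bigl(\frac{\lambda\mu}{\lambda+\mu}\bigr)^n=C_2^{-1}$. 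This forces $C\ge C_2$.

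\textbf{Your route.} You work entirely on the primal side. The recursion is used only on finite trees, where it is an elementary finite-dimensional minimization. Compactness of $\partial T_\ast$ gives $C_n\downarrow C$. Monotone iteration of $\phi$ from $C_0=1$ then picks out the largest fixed point in $[0,1]$. Your opening observation that $C=\phi(C)$ is actually never used, since the limit is a fixed point by continuity of $\phi$.

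\textbf{What each approach buys.} Your argument is self-contained: it needs neither the duality theorem of \cite{AH1996} nor the recursion on the infinite tree. It also treats both cases uniformly and makes explicit the truncated branched-continued-fraction algorithm mentioned after \eqref{eqCapRecursion}. The paper's argument is shorter given the cited machinery. More importantly, it produces the equilibrium measure $C\nu^{\lambda,\mu}$, which is reused in the proof of Theorem~\ref{theodrCap} to show $\Cap_\ast^{\lambda,\mu}(\Lambda^{-1}([0,1)))=C$. Your Dini-type step depends on the set being compact, so it would not by itself give that identity. The set $\Lambda^{-1}([0,1))$ is $\partial T_\ast$ minus the single point $\ast111\dots$, whose capacity $1-\mu$ is positive when $\mu<1$, so subadditivity cannot recover it either.
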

\begin{proof}
From \eqref{eqRicorso}, since $\partial T_\ast=\tau_0(\partial T_\ast)\cup\tau_1(\partial T_\ast)$:
\[
C:=\Cap_\ast^{\lambda,\mu}(\partial T_\ast)=\frac{\frac{\Cap_\ast^{\lambda,\mu}(\partial T_\ast)}{\lambda}+\frac{\Cap_\ast^{\lambda,\mu}(\partial T_\ast)}{\mu}}{1+\frac{\Cap_\ast^{\lambda,\mu}(\partial T_\ast)}{\lambda}+\frac{\Cap_\ast^{\lambda,\mu}(\partial T_\ast)}{\mu}}=\frac{C\left(\frac{1}{\lambda}+\frac{1}{\mu}\right)}{1+C\left(\frac{1}{\lambda}+\frac{1}{\mu}\right)}.
\]
The equation has solutions $C_1=0$ and $C_2=\frac{\lambda+\mu-\lambda\mu}{\lambda+\mu}$. Since we a priori know that $C\ge0$, when $C_2\le0$ the whole boundary has zero capacity. 

We now show that, when $C_2>0$, $\Cap_\ast^{\lambda,\mu}(\partial T_\ast)=C_2$. Let $\rho_0=\frac{\frac{1}{\lambda}}{\frac{1}{\lambda}+\frac{1}{\mu}}$ and $\rho_1=\frac{\frac{1}{\mu}}{\frac{1}{\lambda}+\frac{1}{\mu}}$, and consider the probability Bernoulli measure $\nu$ on $\partial T_\ast$ defined on cylinder sets by
\[
\nu(\partial T_a)=\rho_0^{|a|_0}\rho_1^{|a|_1}.
\]
Using the fact that $\lambda\mu<\lambda+\mu$, we can compute its energy,
\begin{eqnarray*}
    \mathcal{E}^{\lambda,\mu}_\ast(\nu)&=&\sum_{a\in E(T_\ast)}\nu(\partial T_a)^2\wt^{\lambda,\mu}(a)\\
    &=&\sum_{n=0}^\infty\sum_{a:|a|=n}(\rho_0^{|a|_0}\rho_1^{|a|_1})^2\lambda^{|a|_0}\mu^{|a|_1}\\
    &=&\sum_{n=0}^\infty\sum_{k=0}^n\binom{n}{k}\frac{\mu^{2n-2k}\lambda^{2k}\lambda^{n-k}\mu^{k}}
    {(\lambda+\mu)^{2n}}\\
    &=&\sum_{n=0}^\infty\left(\frac{\lambda\mu}{\lambda+\mu}\right)^n\\
    &=&\frac{\lambda+\mu}{\lambda+\mu-\lambda\mu}\\
    &=&C^{-1}<\infty.
\end{eqnarray*}
By the dual characterization \eqref{eqCapDual}, this implies
\[
\Cap_\ast^{\lambda,\mu}(\partial T_\ast)
\ge
\frac{1}{\mathcal E_\ast^{\lambda,\mu}(\nu)}
=
C_2
>0.
\]
Since the recursive identity showed that the only possible values are
\(0\) and \(C_2\), it follows that
\[
\Cap_\ast^{\lambda,\mu}(\partial T_\ast)=C_2.
\]
\end{proof}
Incidentally, this calculation shows that the Bernoulli measure $\nu=\nu^{\lambda,\mu}$ realizes the supremum in dual definition of capacity \eqref{eqCapDual}, i.e. that $C\nu^{\lambda,\mu}$ is the {\it equilibrium measure} for $\partial T_\ast$ with respect to the weight $\wt^{\lambda,\mu}$. 

\begin{proof}[Proof of Theorem \ref{theodrCap}]
Let $c(x)=\Cap_\ast^{\lambda,\mu}(\Lambda^{-1}([0,x)))$, $[0,1]\xrightarrow{c}[0,C]$, which is an increasing function of $x$.
We first verify that $c$ is left continuous. The identity $c(1)=C$ will be proved at the end of the argument. A property of any capacity $\Cap$ is that, if $A_n\nearrow A$, then $\Cap(A_n)\nearrow\Cap(A)$.
Then, if $x_n\nearrow x$,
\begin{eqnarray*}
\lim_{n\to\infty}c(x_n)&=&\sup_{n\to\infty}\Cap^{\lambda,\mu}(\Lambda^{-1}([0,x_n)))=\Cap^{\lambda,\mu}\left(\bigcup_{n=1}^\infty\Lambda^{-1}([0,x_n))\right)\\
    &=&\Cap^{\lambda,\mu}(\Lambda^{-1}([0,x)))=c(x).
\end{eqnarray*}
It is easily verified that for $j=0,1$
\[
\Lambda^{-1}([g_j(0),g_j(x)))=\tau_j(\Lambda^{-1}([0,x))),
\]
a fact that we will use below.
 Then,
\begin{eqnarray*}
    c(g_0(x))
    &=&
    \Cap_\ast^{\lambda,\mu}(\Lambda^{-1}([0,g_0(x))))\\
    &=&
    \Cap_\ast^{\lambda,\mu}(\Lambda^{-1}([g_0(0),g_0(x))))\\
    &=&
    \Cap_\ast^{\lambda,\mu}(\tau_0(\Lambda^{-1}([0,x))))\\
    &=&
    \frac{\frac{c(x)}{\lambda}}{1+\frac{c(x)}{\lambda}}
\end{eqnarray*}
by \eqref{eqRicorso}. Similarly,
\begin{eqnarray*}
    c(g_1(x))
    &=&
    \Cap_\ast^{\lambda,\mu}(\Lambda^{-1}([0,g_1(x))))\\
    &=&
    \Cap_\ast^{\lambda,\mu}
    \left(
    \Lambda^{-1}\bigl([g_0(0),g_0(1))\cup[g_1(0),g_1(x))\bigr)
    \right)\\
    &=&
    \Cap_\ast^{\lambda,\mu}
    \left(
    \tau_0(\Lambda^{-1}([0,1)))
    \cup
    \tau_1(\Lambda^{-1}([0,x)))
    \right)\\
    &=&
    \frac{\frac{c(1)}{\lambda}+\frac{c(x)}{\mu}}
    {1+\frac{c(1)}{\lambda}+\frac{c(x)}{\mu}},
\end{eqnarray*}
again by \eqref{eqRicorso}. Then substitute $f(x)=C^{-1}c(x)$.

Finally, we have to show that $f(1)=1$, or, equivalently, that 
\[
        c(1):=\Cap_\ast^{\lambda,\mu}\bigl(\Lambda^{-1}([0,1))\bigr)=C.
\]
Let \(x_n\uparrow1\), \(x_n<1\), and set
\[
        E_n=\Lambda^{-1}([0,x_n)),
\]
and let \(\nu\) be the Bernoulli equilibrium measure from the
proof of Proposition~\ref{propTotCap}. Then
\[
        \mathcal E_\ast^{\lambda,\mu}(\nu)=C^{-1}.
\]
Moreover \(\nu(E_n)\uparrow1\). Put
\[
        \nu_n=\frac{\nu|_{E_n}}{\nu(E_n)}.
\]
Then \(\nu_n\) is a probability measure supported on \(E_n\), and
\[
        \mathcal E_\ast^{\lambda,\mu}(\nu_n)
        =
        \nu(E_n)^{-2}
        \mathcal E_\ast^{\lambda,\mu}(\nu|_{E_n}).
\]
By monotone convergence,
\[
        \mathcal E_\ast^{\lambda,\mu}(\nu|_{E_n})
        \longrightarrow
        \mathcal E_\ast^{\lambda,\mu}(\nu)
        =
        C^{-1}.
\]
Since \(\nu(E_n)\to1\), we get
\[
        \mathcal E_\ast^{\lambda,\mu}(\nu_n)
        \longrightarrow C^{-1}.
\]
The dual formula gives
\[
        \Cap_\ast^{\lambda,\mu}(E_n)
        \ge
        \frac{1}{\mathcal E_\ast^{\lambda,\mu}(\nu_n)}
        \longrightarrow C.
\]
Therefore
\[
        \Cap_\ast^{\lambda,\mu}\bigl(\Lambda^{-1}([0,1))\bigr)=C.
\]

\end{proof}
\begin{proposition}\label{propPTpar} The trapping condition (AA3) holds if and only if \eqref{eqPTparTrap} holds,
\[
\mathcal{P}_{\mathrm{tr}}=\{(\lambda,\mu,t)\in\mathcal{P}:\lambda\ge1,(\lambda+\mu)^2\ge\lambda^2\mu\}.
\]
\end{proposition}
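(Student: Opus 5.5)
The plan is to reduce Proposition~\ref{propPTpar} to the description \eqref{eqLinParTr} of $\mathcal{L}_{\mathrm{tr}}$ in linear parameters. First I remove the projective parameter $t$, and then I compute the linear parameters $(\lambda_{\mathrm{lin}},\gamma,q)$ of the matrices \eqref{eqAzoeruno} as functions of $(\lambda,\mu)$.

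\emph{Step 1: the parameter $t$ is irrelevant.} By Theorem~\ref{theodeRhamExists}, together with \eqref{eqLinParTr} (Proposition~\ref{propTrapped}), (AA3) holds exactly when the normalized solution $f$ is continuous. If $f$ solves \eqref{eqDR} with data $\Phi(A_j;\cdot)$, then $\Phi(T_t;f)$ is strictly increasing and left continuous. It fixes $0$ and $1$, and it solves the system with data $\Phi(T_tA_jT_t^{-1};\cdot)$. By uniqueness it is therefore the solution for $B_0,B_1$. Since $\Phi(T_t;\cdot)$ is a homeomorphism of $[0,1]$, this solution is continuous if and only if $f$ is. Hence (AA3) for $(\lambda,\mu,t)$ is equivalent to (AA3) for $(\lambda,\mu,1)$, and it suffices to treat $A_0,A_1$ as in \eqref{eqAzoeruno}. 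This uses that these matrices define admissible data for every $\lambda,\mu>0$, with $C$ given by the algebraic expression $(\lambda+\mu-\lambda\mu)/(\lambda+\mu)$ even when it is $\le 0$.

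\emph{Step 2: convert to linear parameters.} $A_0$ already has the form in \eqref{eqLinPar}, with $\lambda_{\mathrm{lin}}=\lambda$ and $\gamma-\lambda=C$, so $\gamma=\lambda+C$. Multiplying $A_1$ by $\lambda$ gives the representative with upper right entry $1$:
\[
\lambda A_1=\begin{pmatrix}\lambda/\mu&1\\ \lambda C/\mu&\lambda+C\end{pmatrix}.
\]
Its lower right entry is $\gamma$, consistently, and $q=1+\lambda/\mu$. I then need to check the remaining entry, $q-\gamma=\lambda C/\mu$. Writing $s=\lambda+\mu$ and $C=1-\lambda\mu/s$, this reduces to the identity $-\lambda+\lambda\mu/s=-\lambda^2/s$, which is true. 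I also check that $\gamma>1$: we have $\gamma-1=\lambda-\lambda\mu/s=\lambda^2/s>0$. So $(\lambda,\lambda+C,1+\lambda/\mu)\in\mathcal L$.

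\emph{Step 3: apply \eqref{eqLinParTr}.} The condition $\lambda_{\mathrm{lin}}\ge1$ is $\lambda\ge1$. The condition $q\ge\gamma-1$ reads $1+\lambda/\mu\ge\lambda^2/(\lambda+\mu)$. Multiplying by $\mu(\lambda+\mu)>0$ gives $(\lambda+\mu)^2\ge\lambda^2\mu$, which yields \eqref{eqPTparTrap}.

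The only delicate point is Step 1. It relies on transporting the characterization ``trapped $\iff$ continuous'' through conjugation, rather than tracking the fixed points $r_0,r_1$ defining $\alpha$, which do move under $T_t$. If one instead wants to verify $\alpha\ge-1$ directly for $B_j$, one would have to show that conjugation by $T_t$ acts on the transposed fixed points by a Möbius map preserving the condition $\alpha\ge -1$, which is more cumbersome. The algebra in Steps 2--3 is routine.
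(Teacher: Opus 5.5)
Your conclusion is right, and Steps 2--3 are exactly the $t=1$ case of the paper's computation. The paper does not eliminate $t$. It writes out $B_0,B_1$ in \eqref{eq:B0}--\eqref{eq:B1} and computes, for every $t>0$,
$r_0=-1+t\lambda^2/((\lambda+\mu)(\lambda-1))$ and $r_1=-1+t((\lambda+\mu)^2-\lambda^2\mu)/(\mu(\lambda+\mu))$.
The condition $\alpha\ge-1$ is then read off directly.

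Where you genuinely differ is Step 1. There you remove $t$ by conjugating the normalized solution and invoking uniqueness and the continuity dichotomy of Theorem~\ref{theodeRhamExists}. This gives a conceptual reason for $t$-invariance, but it makes a purely algebraic statement depend on the full existence theorem. That dependence is fragile exactly where you need it. When $q<\gamma-1$, i.e.\ $(\lambda+\mu)^2<\lambda^2\mu$, one has $f(1-2^{-n})=F_1^n(0)\to 1/(\gamma-q)<1$. So if \eqref{eqDR} is imposed at $x=1$, then $f(1/2^-)=F_0(1/(\gamma-q))<F_0(1)=f(1/2)$. Hence the left-continuous normalized solution that Step 1 manipulates is not available in that regime. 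You would have to recast Step 1 in terms of one-sided limits of the map $\pi$ of Lemma~\ref{lemmaTransitionRevised} on non-dyadic points.

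It is much simpler to do what you dismissed as cumbersome. Since ${}^t(T_tAT_t^{-1})={}^t(T_t^{-1})\,{}^tA\,{}^tT_t$ and $\Phi({}^tT_t;\cdot)^{-1}(z)=t(z+1)-1$, conjugation moves the transposed fixed points by $r\mapsto t(r+1)-1$. This map fixes $-1$ and preserves $r\ge-1$. Equivalently, the linear parameters become $(\lambda,1+t(\gamma-1),tq)$, as noted in \S\ref{SSectCS}, which visibly preserves $\lambda\ge1$ and $q\ge\gamma-1$. This is why all the paper's formulas have the shape $-1+t(\cdots)$. That one line can replace your Step 1 and makes your proof entirely elementary.
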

The proof consists in computing the dynamical invariants in terms of the potential theoretic parameters.
\begin{proof}
Let $A_0,A_1$ be the matrices in \eqref{eqAzoeruno}, and let $B_j=T_t A_j T_t^{-1}$.

One computes
\begin{equation}\label{eq:B0}
 B_0=
 \begin{pmatrix}
 1 & 0\\
 tC+(1-t)(1-\lambda) & \lambda
 \end{pmatrix}
\end{equation}
and
\begin{equation}\label{eq:B1}
B_1=
\begin{pmatrix}
\frac{1}{\mu}+\frac{1}{\lambda}-\frac{1}{\lambda t} & \frac{1}{\lambda t}\\[0.5em]
\frac{(1-t)+tC}{\mu}-(1-t)-\frac{(1-t)C}{\lambda}-\frac{(1-t)^2}{\lambda t}
& 1+\frac{C}{\lambda}+\frac{1-t}{\lambda t}
\end{pmatrix}.
\end{equation}
The calculation of the dynamical parameters for the data $\Phi(B_0;\cdot),\Phi(B_1;\cdot)$ is routine:
\begin{align}\label{eqProjParUno}
    \gamma&=1+t\frac{\lambda^2}{\lambda+\mu};\\
    \label{eqProjParDue} r_0&=\begin{cases}
        -1+
t\frac{\lambda^2}{(\lambda+\mu)(\lambda-1)}
\text{ if } \lambda\neq1,\\
+\infty\text{ if }\lambda=1;
    \end{cases}\\
\label{eqProjParTre}    r_1&=
-1+
t
\frac{(\lambda+\mu)^2-\lambda^2\mu}
{\mu(\lambda+\mu)}.
\end{align}
Since $\alpha=\min\{0,r_0,r_1\}$, the proposition follows immediately.    
\end{proof}
The curves $\Gamma_1=\{\lambda=1\}$, $\Gamma_2=\{\lambda+\mu=\lambda\mu\}$, and $\Gamma_3=\{(\lambda+\mu)^2=\lambda^2\mu\}$ do not meet in the quadrant $\lambda>0,\mu>0$, and they divide it into four regions. Each region and each curve corresponds to a distinctive property of the de Rham system in potential theoretic coordinates.

\subsection{Classical vs. dyadic Riesz capacities}\label{SSectRBc}
For $0<\tau\le 1/2$ and a Borel measure $\omega\ge0$ on $[0,1]$, let 
\[
\kappa_\tau\omega(x)=\int_0^1\frac{d\omega(y)}{|x-y|^{1-\tau}},
\]
where $0\le x\le1$, and
\[
\mathcal{E}^\tau(\omega)=\int_0^1[\kappa_\tau\omega(x)]^2dx.
\]
The {\it Riesz capacity} $\Qap_\tau(E)$ of a Borel subset $E$ of $[0,1]$ is
\[
\Qap_\tau(E)=\sup\left\{\frac{\omega^2(E)}{\mathcal{E}^\tau(\omega)}: \omega\ge0,\ \text{supp}(\omega)\subseteq E\right\}.
\]
Equivalently, for \(0<\tau<1/2\), this capacity is comparable to the
capacity associated with the Riesz energy of order \(1-2\tau\), and, for $\tau=1/2$, to the capacity associated with a logarithmic energy,
\[
\mathcal{E}^\tau(\omega)\approx\begin{cases}
    \iint_{[0,1]^2}\frac{d\omega(x)d\omega(y)}{|x-y|^{1-2\tau}}\text{ if }0<\tau<1/2\\
    \iint_{[0,1]^2}\log\frac{2}{|x-y|}d\omega(x)d\omega(y)\text{ if }\tau=1/2.
\end{cases}
\]
It is explicitly proved in \cite{ARSW2014} (Theorem 1), and the result is implicit in \cite{BP1992}, that for $0\le s<1$
\begin{equation}\label{eqCapDnonD}
    \begin{split}
        \Cap_s(\Lambda^{-1}(E))&\approx\Qap_{\frac{1-s}{2}}(E)\text{ for a Borel subset $E$ of $[0,1]$;}\\
        \Qap_{\frac{1-s}{2}}(\Lambda(F))&\approx\Cap_s(F)\text{ for a Borel subset $F$ of $\partial T_\ast$}.
    \end{split}
\end{equation}
The two-sided estimates \eqref{eqCapDnonD}, together with recursive formulas like \eqref{eqCapRecursion}, can be of great help in estimating set capacities in classical potential theory. See \cite{AC2022} for a simple application.
\begin{lemma}\label{lemmaCapDnonD}
    Fix $0\le s<1$. Let $[0,1]\xrightarrow{\psi}[0,1]$ be strictly increasing, $C^1$ and having $C^1$ inverse, and let $E\subseteq[0,1]$ be a Borel set. Then, $\Cap_s(\Lambda^{-1}(E))=0$ if and only if $\Cap_s(\Lambda^{-1}(\psi(E)))=0$. Also, for a Borel subset $F$ of $\partial T_\ast$, we have $\Cap_s(F)=0$ if and only if $\Cap_s(\Lambda^{-1}(\psi(\Lambda(F))))=0$.
\end{lemma}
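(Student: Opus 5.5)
The plan is to pass through the classical Riesz capacity, using the two-sided comparison \eqref{eqCapDnonD}. Since $\Cap_s(\Lambda^{-1}(E))\approx\Qap_{\frac{1-s}{2}}(E)$, the first claim reduces to the following classical statement: for $\tau=\frac{1-s}{2}\in(0,1/2]$, $\Qap_\tau(E)=0$ if and only if $\Qap_\tau(\psi(E))=0$. By symmetry (replace $\psi$ by $\psi^{-1}$, which is also $C^1$ with $C^1$ inverse), it suffices to prove one direction. I will show $\Qap_\tau(\psi(E))\lesssim\Qap_\tau(E)$, with a constant depending only on $\psi$.

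To do this I work with the dual (measure) formulation. Take $\omega\ge0$ with $\mathrm{supp}(\omega)\subseteq\psi(E)$, and let $\tilde\omega=(\psi^{-1})_\ast\omega$. This is supported in $E$ and has the same total mass. Because $\psi$ and $\psi^{-1}$ are $C^1$ on the compact interval, $\psi$ is bi-Lipschitz: $L^{-1}|x-y|\le|\psi(x)-\psi(y)|\le L|x-y|$. I will use the equivalent double-integral energies recorded above, the Riesz kernel $|x-y|^{-(1-2\tau)}$ for $\tau<1/2$ and the kernel $\log\frac{2}{|x-y|}$ for $\tau=1/2$. Under the change of variables, both kernels change only by bounded factors: a multiplicative factor $L^{1-2\tau}$ in the Riesz case, and at most an additive $\log L$ in the log case. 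Since $\log\frac{2}{|x-y|}\ge\log 2>0$ on $[0,1]^2$, an additive constant can be absorbed into a multiplicative one. Hence the energies of $\omega$ and $\tilde\omega$ are comparable, and taking suprema gives comparable capacities.

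I expect the main technical point to be this use of the comparability between $\mathcal E^\tau$ and the double-integral energies, which the paper states with $\approx$. If one prefers to stay with $\mathcal{E}^\tau(\omega)=\int[\kappa_\tau\omega]^2dx$, the alternative is to change variables $x=\psi(u)$ directly in the potential: $\kappa_\tau\omega(\psi(u))\approx\kappa_\tau\tilde\omega(u)$ pointwise, by the bi-Lipschitz bound, and $dx=\psi'(u)\,du$ with $\psi'$ bounded above and below. Either route gives the bound with constants depending only on $\|\psi'\|_\infty$ and $\|(\psi^{-1})'\|_\infty$.

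The endpoint $s=0$ corresponds to $\tau=1/2$, which is covered by the logarithmic case. Should \eqref{eqCapDnonD} be unavailable there, I would argue directly on the tree instead: the kernel $\sum_{a\in P_\ast(\zeta)\cap P_\ast(\xi)}1$ is comparable to $\log\frac{2}{|\Lambda\zeta-\Lambda\xi|}$ only off dyadic effects. For that reason the route through \eqref{eqCapDnonD} is the one I would use.

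For the second claim, let $F\subseteq\partial T_\ast$ be Borel and set $E=\Lambda(F)$. By the second line of \eqref{eqCapDnonD}, $\Cap_s(F)\approx\Qap_\tau(E)$. By the first line applied to $\psi(E)$, $\Cap_s(\Lambda^{-1}(\psi(E)))\approx\Qap_\tau(\psi(E))$. The classical invariance proved above links the two quantities. One small point to check is that $\Lambda(F)$ is Borel: $\Lambda$ is continuous and at most two-to-one, so images of Borel sets are Borel (or at least analytic, hence capacitable), and that suffices.
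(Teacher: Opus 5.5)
Your proposal is correct and follows essentially the same route as the paper: reduce via \eqref{eqCapDnonD} to the classical capacity $\Qap_{\frac{1-s}{2}}$, show that this capacity changes only by bounded factors under the bi-Lipschitz map $\psi$, and use that $\Lambda$ maps Borel sets to Borel sets (the paper cites \cite[Lemma 15]{ARSW2014} for this last point). The difference is only in detail: you spell out the bi-Lipschitz invariance (push-forward of measures and comparison of the kernels, including the logarithmic case $s=0$), which the paper asserts in one line.
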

\begin{proof} The map $\Lambda$ maps Borel sets to Borel sets \cite[Lemma 15]{ARSW2014}, hence we have no problem with capacitability. Since $|\psi(x)-\psi(y)|\approx|x-y|$ and $d\psi(x)=\psi'(x)dx\approx dx$, we have that
     $\Qap_{\frac{1-s}{2}}(E)\approx \Qap_{\frac{1-s}{2}}(\psi(E))$, with multiplicative constants which only depend on $s$ and $\psi$. We can then use \eqref{eqCapDnonD}.
\end{proof}
We recall the standard relation between Riesz capacities and
Hausdorff dimension; see, for instance, \cite[Chapter 5]{AH1996} or \cite[Theorem 6.4]{Falconer1986}.
For \(0<\tau<1/2\), the critical Hausdorff exponent associated with
\(\Qap_\tau\) is
\[
        d_\tau:=1-2\tau .
\]
The endpoint case \(\tau=1/2\) corresponds to logarithmic capacity and
has critical exponent \(0\).

For every Borel set \(E\subseteq[0,1]\),
\[
        \dim_H E<d_\tau
        \quad\Longrightarrow\quad
        \Qap_\tau(E)=0,
\]
whereas
\[
        \dim_H E>d_\tau
        \quad\Longrightarrow\quad
        \Qap_\tau(E)>0.
\]
By \eqref{eqCapDnonD},
\[
        \dim_H E<s
        \quad\Longrightarrow\quad
        \Cap_s(\Lambda^{-1}(E))=0,
\]
while
\[
        \dim_H E>s
        \quad\Longrightarrow\quad
        \Cap_s(\Lambda^{-1}(E))>0.
\]
Moreover,
\begin{equation}\label{eqMissing}
        \mathcal H^s(E)<\infty
        \quad\Longrightarrow\quad
        \Cap_s(\Lambda^{-1}(E))=0.
\end{equation}
Thus the dyadic capacity \(\Cap_s\) refines Hausdorff dimension at the
critical exponent \(s\).

\subsection{The proof of Theorem \ref{theoMain}}
The following elementary martingale lemma strengthens, in the direction
needed here, the estimate $M_n/n\to0$ used in
\cite[Lemma~2.3(2)]{Okamura2014}.  The latter estimate controls only the
linear growth of the martingale term.  By contrast, the lemma below shows
that a martingale whose negative increments are uniformly bounded cannot
escape to $+\infty$; equivalently, it returns infinitely often below some
finite level.  This recurrence statement is what allows us to obtain
vanishing capacity at the critical exponent.
The proof could be easily carved out of that of \cite[Theorem~4.3.1, p.~224]{Durrett2019}. We include it here for convenience of the reader.

\begin{lemma}[Martingale lemma]
\label{lem:no-plus-infty}
Let $(M_n,\mathcal F_n)_{n\geq0}$ be a real-valued martingale with $M_0=0$.
Assume that there is a constant $B<\infty$ such that
\[
        M_n-M_{n-1}\geq-B
        \qquad\text{a.s. for every }n\geq1.
\]
Then
\[
        \mathbb P\left(\liminf_{n\to\infty}M_n=+\infty\right)=0.
\]
Equivalently,
\[
        \mathbb P\left(
        \bigcup_{m=1}^\infty
        \{M_n\leq m\ \text{for infinitely many }n\}
        \right)=1.
\]
\end{lemma}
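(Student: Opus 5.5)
The plan is the classical stopping-time argument for martingales with increments bounded below. Fix a level $m\in\mathbb N$ and set
\[
\tau_m=\inf\{n\ge0: M_n\le -m\}\qquad(\inf\emptyset=+\infty).
\]
The stopped process $M_{n\wedge\tau_m}$ is again a martingale. Because every increment is at least $-B$, we have $M_{n\wedge\tau_m}\ge -m-B$ for all $n$. Hence $M_{n\wedge\tau_m}+m+B$ is a nonnegative martingale, and by the martingale convergence theorem it converges almost surely to a finite limit.

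This already gives the key dichotomy on the event $\{\tau_m=\infty\}$: there $M_n=M_{n\wedge\tau_m}$ for all $n$, so $M_n$ converges to a finite limit. Consequently
\[
\{\liminf_n M_n=+\infty\}\cap\{\tau_m=\infty\}
\]
is a null set. On the other hand, the event $\{\liminf M_n=+\infty\}$ does not obviously avoid $\{\tau_m<\infty\}$. A path can dip below $-m$ and then escape to $+\infty$, so a second ingredient is needed.

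To handle this, I would use the union over all $m$. Every path has $\inf_n M_n>-\infty$ on $\{\liminf M_n=+\infty\}$, because $M_n$ is finite for each $n$ and eventually large. Therefore such a path lies in $\{\tau_m=\infty\}$ for some $m$, namely any $m>-\inf_n M_n$. This gives
\[
\{\liminf M_n=+\infty\}\subseteq\bigcup_m\bigl(\{\liminf M_n=+\infty\}\cap\{\tau_m=\infty\}\bigr),
\]
a countable union of null sets, and the first claim follows.

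For the equivalent formulation, note that the complement of $\{\liminf M_n=+\infty\}$ is exactly $\{\liminf M_n<+\infty\}$. This is the union over $m$ of $\{M_n\le m \text{ infinitely often}\}$, which proves the second statement.

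The only delicate point is the observation that escaping to $+\infty$ forces a finite global infimum. That is what lets the localization by $\tau_m$ cover the whole event, so I do not expect a real obstacle. The one thing to check is that the lower bound on the stopped process correctly uses the overshoot bound: at time $\tau_m$ the process is at least $M_{\tau_m-1}-B>-m-B$.
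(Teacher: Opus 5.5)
Your proposal is correct and follows the paper's proof essentially step by step: the same stopping times $\tau_m$ at the first entry below $-m$, the nonnegative stopped martingale $M_{n\wedge\tau_m}+m+B$ converging almost surely, and the observation that a path tending to $+\infty$ has a finite infimum and so lies in some $\{\tau_m=\infty\}$. Your overshoot check $M_{\tau_m}\ge M_{\tau_m-1}-B>-m-B$ is the bound the paper uses; it is valid because $M_0=0$ forces $\tau_m\ge1$.
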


\begin{proof}
For an integer $a\geq1$, define the stopping time
\[
        \tau_a:=\inf\{n\geq0:M_n\leq-a\},
\]
with the convention $\inf\varnothing=\infty$.  Since the negative
increments are bounded below by $-B$, at the first entrance below $-a$ we
have
\[
        M_{\tau_a}\geq-a-B.
\]
Therefore the stopped martingale
\[
        N_n:=M_{n\wedge\tau_a}+a+B
\]
is nonnegative.  Hence $N_n$ converges almost surely to a finite limit.  On
the event $\{\tau_a=\infty\}$, the original martingale
$M_n=N_n-a-B$ consequently converges to a finite limit and cannot tend to
$+\infty$.

If $M_n\to+\infty$, then the numerical sequence $(M_n)$ has a finite lower
bound.  Hence there is an integer $a\geq1$ such that $M_n>-a$ for every
$n$, that is, $\tau_a=\infty$.  It follows that
\[
        \{M_n\to+\infty\}
        \subseteq
        \bigcup_{a=1}^\infty
        \bigl(\{\tau_a=\infty\}\cap\{M_n\to+\infty\}\bigr).
\]
Each event on the right has probability zero by the preceding paragraph.
Thus
\[
        \mathbb P(M_n\to+\infty)=0.
\]
Since
\[
        \liminf_{n\to\infty}M_n=+\infty
        \quad\Longleftrightarrow\quad
        M_n\to+\infty,
\]
the first assertion follows.

If the displayed union in the second assertion failed at a point, then,
for every integer $m$, the inequality $M_n\leq m$ would hold only finitely
often.  Hence $M_n\to+\infty$, and the second assertion follows from the
first.
\end{proof}

Below, $[0,1]$ is endowed with its Borel $\sigma$-algebra and with the
filtration $(\mathcal F_n)_{n\geq0}$ generated by the dyadic intervals of
generation $n$.  We denote by $I_n(x)$ the unique generator of
$\mathcal F_n$ containing $x$.

\begin{lemma}[Geometric lemma]
\label{lem:stopping-cylinder}
Let $\nu$ be a Borel probability measure on $[0,1]$, and let
\[
        R_n(x)=\nu(I_n(x)).
\]
Suppose that
\[
        -\log R_n(x)=L_n(x)+M_n(x),
\]
where $M_n$ is $\mathcal F_n$-measurable and
\[
        L_n(x)\leq n\theta
\]
for every $n$ and almost every $x$.  Put
\[
        d=\frac{\theta}{\log2}.
\]
For $m\in\mathbb R$, define
\[
        E_m:=\{x:M_n(x)\leq m\text{ for infinitely many }n\}.
\]
Then
\[
        \mathcal H^d(E_m)\leq e^m.
\]
In particular, if $0\leq d<1$, then
\[
        \Cap_d\bigl(\Lambda^{-1}(E_m)\bigr)=0.
\]
\end{lemma}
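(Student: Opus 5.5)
The plan is to cover $E_m$ by dyadic intervals on which $\nu$ has mass at least $e^{-m}|I|^d$, then conclude with a Vitali-type disjointness argument. Let $G$ be the full-measure set of $x$ with $L_n(x)\le n\theta$ for all $n$.

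\textbf{The covering.} Fix $N\ge1$. For $x\in E_m\cap G$ there is some $n\ge N$ with $M_n(x)\le m$. Let $n(x)$ be the \emph{smallest} such $n\ge N$. Then
\[
-\log R_{n(x)}(x)\le n(x)\theta+m ,
\]
that is,
\[
\nu(I_{n(x)}(x))\ge e^{-m}2^{-n(x)d}=e^{-m}|I_{n(x)}(x)|^d .
\]
Since $M_n$ and $L_n$ are $\mathcal F_n$-measurable, we may work with versions that are constant on generation-$n$ dyadic intervals. The statement ``$L_n\le n\theta$ a.e.'' then becomes a pointwise statement on intervals, with the possible exception of $\nu$-null intervals, which are harmless below. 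Consequently every point of $I_{n(x)}(x)$ satisfies the same inequality at level $n(x)$.

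\textbf{Extracting a disjoint cover.} The family of intervals $I_{n(x)}(x)$ is a family of dyadic intervals, so its maximal elements are pairwise disjoint and still cover $E_m\cap G$. Each of them has length at most $2^{-N}$. Summing over the maximal intervals gives
\[
\sum |I|^d\le e^m\sum\nu(I)\le e^m .
\]
Hence $\mathcal H^d_{2^{-N}}(E_m\cap G)\le e^m$. Letting $N\to\infty$ yields $\mathcal H^d(E_m\cap G)\le e^m$, where the dyadic net measure is comparable to Hausdorff measure and is exact at the level of upper bounds used here.

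\textbf{Main obstacle: the exceptional set.} The main delicate point is the exceptional set $E_m\setminus G$, which I expect to handle in one of two ways.
\begin{itemize}
\item Read the hypothesis as holding for every $x$ outside a set that is a union of $\nu$-null dyadic intervals. Such intervals are disjoint from the support of $\nu$, so they can be discarded in the application; there they would in fact contribute nothing, since only intervals with $\nu(I)>0$ satisfy the mass inequality.
\item Alternatively, state the bound for $E_m\cap G$. This is all that is needed later, since one only needs a set of full $\nu$-measure.
\end{itemize}
I would adopt the interval-wise (constant-on-cylinders) interpretation, under which the covering argument above applies to all of $E_m$ directly.

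\textbf{The capacity statement.} The final assertion follows immediately from \eqref{eqMissing}: when $0\le d<1$, we have $\mathcal H^d(E_m)<\infty$, and therefore $\Cap_d(\Lambda^{-1}(E_m))=0$.
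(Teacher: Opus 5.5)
Your argument is correct and is essentially the paper's proof. Your $n(x)$ is exactly the first-entry time $\tau_{m,N}(x)$, so the intervals $I_{n(x)}(x)$ are the paper's first-entry cylinders; these are already pairwise disjoint, so passing to maximal elements is harmless but redundant. The mass bound $|I|^d\le e^m\nu(I)$, the estimate $\mathcal H^d_{2^{-N}}(E_m)\le e^m$, and the appeal to \eqref{eqMissing} are the same as in the paper.

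Your exceptional-set discussion is unnecessary. With $L_n,M_n$ real-valued, $-\log R_n$ is finite, so every dyadic interval has positive $\nu$-mass. Since $L_n=-\log R_n-M_n$ is constant on generation-$n$ intervals, ``$L_n\le n\theta$ almost everywhere'' then holds everywhere, so your set $G$ is all of $[0,1]$.
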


\begin{proof}
Fix $m\in\mathbb R$ and an integer $N\geq1$.  Define the first-entry time
\[
        \tau_{m,N}(x):=\inf\{n\geq N:M_n(x)\leq m\}
\]
and set
\[
        G_{m,N}:=\{x:\tau_{m,N}(x)<\infty\}.
\]
Since $M_n$ is $\mathcal F_n$-measurable, $\{\tau_{m,N}=n\}$ is a union of
dyadic intervals of generation $n$.

Let $\mathcal C_{m,N}$ be the family of all dyadic intervals $I$ on which
$\tau_{m,N}$ is equal to the generation of $I$.  These first-entry
cylinders cover $G_{m,N}$, have diameter at most $2^{-N}$, and are pairwise
disjoint.  Indeed, two dyadic intervals are either disjoint or one contains
the other, and strict containment of two first-entry cylinders would
contradict the definition of the first-entry time.

Let $I\in\mathcal C_{m,N}$ have generation $n$.  On $I$, both $R_n$ and
$M_n$ are constant, and $M_n\leq m$.  Therefore
\[
        -\log\nu(I)=-\log R_n=L_n+M_n\leq n\theta+m.
\]
It follows that
\[
        \nu(I)\geq e^{-m}e^{-n\theta}=e^{-m}|I|^d,
\]
or equivalently
\[
        |I|^d\leq e^m\nu(I).
\]
Since the intervals in $\mathcal C_{m,N}$ are pairwise disjoint,
\[
        \sum_{I\in\mathcal C_{m,N}}|I|^d
        \leq e^m\sum_{I\in\mathcal C_{m,N}}\nu(I)
        \leq e^m.
\]

Now $E_m\subseteq G_{m,N}$ for every $N$.  Hence
\[
        \mathcal H^d_{2^{-N}}(E_m)\leq e^m.
\]
Letting $N\to\infty$ gives
\[
        \mathcal H^d(E_m)\leq e^m.
\]
The capacitary conclusion follows from \eqref{eqMissing}.
\end{proof}

\begin{proof}[Proof of Theorem~\ref{theoMain}]
Put
\[
        R_n(x)=\mu_f(I_n(x)).
\]
As in \cite{Okamura2014}, define
\[
        L_n
        =
        \sum_{k=1}^n
        \mathbb E^{\mu_f}
        \left[
        \left.
        -\log\frac{R_k}{R_{k-1}}
        \right|
        \mathcal F_{k-1}
        \right],
        \qquad
        M_n=-\log R_n-L_n.
\]
Then $(M_n,\mathcal F_n)$ is a martingale and $M_0=0$.

Although Okamura assumes continuity of $f$, the proof of
\cite[Lemma~2.1(2)]{Okamura2014} uses only the identity
\[
        \mu_f([a,b))=f(b)-f(a)
\]
for half-open dyadic intervals, together with the functional equation at
their endpoints.  Consequently, the same cylinder-ratio formula holds for
our left-continuous solution, also when it has atoms at dyadic points.

The trapping condition (AA3) (see \cite[Lemmas~2.1(2) and~2.3(1)]{Okamura2014}) implies that
\[
        L_n-L_{n-1}=s\bigl(p_0(Z_{n-1})\bigr)
\]
where the random variable $Z_n$ is "trapped" in $[\alpha,\beta]$,
\[
        Z_n\in[\alpha,\beta].
\]
Moreover, he proves that
\[
\begin{aligned}
M_n-M_{n-1}
&=
-\log\frac{R_n}{R_{n-1}}
-
\mathbb E^{\mu_f}
\left[
\left.
-\log\frac{R_n}{R_{n-1}}
\right|
\mathcal F_{n-1}
\right]
\\
&\geq
-\mathbb E^{\mu_f}
\left[
\left.
-\log\frac{R_n}{R_{n-1}}
\right|
\mathcal F_{n-1}
\right]
\\
&=-(L_n-L_{n-1})
\\
&=-s\bigl(p_0(Z_{n-1})\bigr)
\\
&\geq-\log2
\end{aligned}
\]
almost surely.  Thus Lemma~\ref{lem:no-plus-infty} applies with
$B=\log2$.

When $\alpha=-1$, the endpoint value $p_0(-1)=0$ may occur.  A transition
whose conditional probability is zero is selected only on a
$\mu_f$-null event.  Hence the logarithmic identities above, and in
particular the martingale-increment estimate, remain valid
$\mu_f$-almost surely.  Notice also that the entropy causes no difficulty,
since $s(0)=s(1)=0$.

Moreover, by the definition of $\theta_1$,
\[
        L_n
        =
        \sum_{k=1}^n s\bigl(p_0(Z_{k-1})\bigr)
        \leq n\theta_1.
\]
Set
\[
        d_1:=\frac{\theta_1}{\log2}.
\]

If $d_1=1$, then
\[
        \Cap_1\bigl(\Lambda^{-1}(K)\bigr)
        \leq \Cap_1(\partial T_\ast)=0
\]
for every Borel set $K\subseteq[0,1]$, so we may take $K_0=[0,1]$.

Assume now that $0\leq d_1<1$.  Lemma~\ref{lem:no-plus-infty} gives
\[
        \mu_f\left(\bigcup_{m=1}^\infty E_m\right)=1,
        \qquad
        E_m:=\{x:M_n(x)\leq m\text{ for infinitely many }n\}.
\]
By Lemma~\ref{lem:stopping-cylinder},
\[
        \mathcal H^{d_1}(E_m)\leq e^m<\infty,
\]
and hence
\[
        \Cap_{d_1}\bigl(\Lambda^{-1}(E_m)\bigr)=0
\]
for every $m$.  Set
\[
        K_0:=\bigcup_{m=1}^\infty E_m.
\]
Then $K_0$ is Borel and $\mu_f(K_0)=1$.  By countable subadditivity of
capacity,
\[
\begin{aligned}
        \Cap_{d_1}\bigl(\Lambda^{-1}(K_0)\bigr)
        &\leq
        \sum_{m=1}^\infty
        \Cap_{d_1}\bigl(\Lambda^{-1}(E_m)\bigr)
        =0.
\end{aligned}
\]
This proves the theorem.
\end{proof}

\section{Further results and problems}\label{SectFR}
This section contains some supplementary material, mostly meant to have a map where to place the main findings of the article. Results of routine algebraic calculations are presented without lengthy derivations.
\subsection{Different coordinate systems}\label{SSectCS}
In this subsection we provide some details on the two main parameterizations of the data of the de Rham systems \eqref{eqDR}, and on their mutual relationship. This is useful to chart known results, problems which are still open, and provide a guide for generalizations and extensions.
\subsubsection{Linear coordinates}
Let
\[
        \mathcal D
        =
        \{(F_0,F_1):\text{\rm (A1) and (A2) hold}\}
\]
be the space of de Rham data, and let
\[
        \mathcal D_{\mathrm{tr}}
        =
        \{(F_0,F_1)\in\mathcal D:\text{\rm (AA3) holds}\}.
\]
We use the linear parametrization
\[
        L:\mathcal L\to\mathcal D,
        \qquad
        (\lambda,\gamma,q)\mapsto (F_0,F_1),
\]
given by \eqref{eqLinPar}, where
\[
        \mathcal L=(0,\infty)\times(1,\infty)\times(0,\infty).
\]

\begin{proposition}\label{propTrapped}
The de Rham data corresponding to \((\lambda,\gamma,q)\in\mathcal L\)
satisfy the trapping condition \(\alpha\ge-1\) if and only if
\[
        \lambda\ge1,
        \qquad
        q\ge\gamma-1.
\]
Equivalently, if
\[
        \mathcal L_{\mathrm{tr}}
        =
        \{(\lambda,\gamma,q)\in\mathcal L:
        \lambda\ge1,\ q\ge\gamma-1\},
\]
then
\[
        L(\mathcal L_{\mathrm{tr}})=\mathcal D_{\mathrm{tr}}.
\]
\end{proposition}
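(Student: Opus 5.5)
The plan is to compute the two dynamical invariants $r_0,r_1$ explicitly in the linear parameters \eqref{eqLinPar}, and then read off the condition $\alpha=\min\{0,r_0,r_1\}\ge-1$. Since $0\ge-1$ always, the trapping condition (AA3) is equivalent to the two separate conditions $r_0\ge-1$ and $r_1\ge-1$. This is the same routine computation used in the proof of Proposition \ref{propPTpar}, but now in the simpler linear coordinates.

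For $r_0$, transpose $A_0$ to get $^tA_0=\begin{pmatrix}1&\gamma-\lambda\\0&\lambda\end{pmatrix}$. This gives the affine map $\Phi(^tA_0;y)=(y+\gamma-\lambda)/\lambda$.
\begin{itemize}
\item If $\lambda=1$, the map is a translation, so $r_0=+\infty$ (consistent with $a_0=d_0$).
\item If $\lambda\ne1$, the unique fixed point is $r_0=(\gamma-\lambda)/(\lambda-1)$. It is convenient to write $r_0+1=(\gamma-1)/(\lambda-1)$.
\end{itemize}
Since $\gamma>1$ on $\mathcal L$, we get $r_0+1>0$ when $\lambda>1$ and $r_0+1<0$ when $\lambda<1$. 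Hence $r_0\ge-1$ if and only if $\lambda\ge1$.

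For $r_1$, transpose $A_1$ to get $^tA_1=\begin{pmatrix}q-1&q-\gamma\\1&\gamma\end{pmatrix}$. The fixed-point equation $y(y+\gamma)=(q-1)y+q-\gamma$ becomes
\[
y^2+(\gamma-q+1)y+(\gamma-q)=(y+1)(y+\gamma-q)=0 .
\]
This confirms that $-1$ is always a fixed point, and the other one is $r_1=q-\gamma$. When $q=\gamma-1$ this is a double root; the convention "the fixed point other than $-1$" should then be read as $r_1=-1$, which is harmless for the inequality. Thus $r_1\ge-1$ if and only if $q\ge\gamma-1$. Combining the two cases gives $\alpha\ge-1$ if and only if $\lambda\ge1$ and $q\ge\gamma-1$.

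For the reformulation $L(\mathcal L_{\mathrm{tr}})=\mathcal D_{\mathrm{tr}}$, I still need that $L$ maps $\mathcal L$ onto $\mathcal D$. The invariants $r_0,r_1,\alpha$ depend only on the projective classes of $A_0,A_1$, hence only on $(F_0,F_1)$, so surjectivity is all that is required.
\begin{itemize}
\item Normalize $F_0$: since $F_0(0)=0$, we have $b_0=0$, and we can scale $a_0=1$. Writing $\gamma=F_0(1)^{-1}>1$ determines $c_0+d_0=\gamma$. The remaining free entry $d_0=\lambda$ must be positive, because $F_0$ is increasing with determinant $a_0d_0-b_0c_0=d_0$ and the denominator must not vanish on $[0,1]$.
\item Normalize $F_1$: scale $b_1=1$. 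Then $F_1(0)=1/\gamma$ forces $d_1=\gamma$, and $F_1(1)=1$ forces $a_1+1=c_1+\gamma$. Setting $q=a_1+1$ gives $c_1=q-\gamma$.
\item Positivity of $q$ comes from monotonicity: $\det A_1=(q-1)\gamma-(q-\gamma)=q(\gamma-1)>0$.
\end{itemize}

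I expect the main obstacle to be this bookkeeping of the parametrization: checking that every pair in $\mathcal D$ is reached, with the correct sign normalizations and no vanishing denominators on $[0,1]$. The degenerate cases $\lambda=1$ and $q=\gamma-1$ also need the conventions stated above. The fixed-point computations themselves are straightforward factorizations.
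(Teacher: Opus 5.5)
Your proposal is correct and follows the paper's proof. You compute the fixed points of the transposed maps, $r_0=(\gamma-\lambda)/(\lambda-1)$ (or $+\infty$ when $\lambda=1$) and $r_1=q-\gamma$, and use $\gamma>1$ to read off $r_0\ge-1\iff\lambda\ge1$ and $r_1\ge-1\iff q\ge\gamma-1$. Your additional check that $L$ maps $\mathcal L$ onto $\mathcal D$, via the normalizations $b_0=0$, $a_0=1$, $b_1=1$ and $\det A_1=q(\gamma-1)>0$, fills in a step the paper leaves implicit for the reformulation $L(\mathcal L_{\mathrm{tr}})=\mathcal D_{\mathrm{tr}}$.
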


\begin{proof}
For the matrices in \eqref{eqLinPar}, a direct computation gives
\[
        \Phi({}^tA_0;y)=\frac{y+\gamma-\lambda}{\lambda},
        \qquad
        \Phi({}^tA_1;y)
        =
        \frac{(q-1)y+q-\gamma}{y+\gamma}.
\]
Hence the relevant fixed points are
\[
        r_0=
        \begin{cases}
        \dfrac{\gamma-\lambda}{\lambda-1},& \lambda\ne1,\\[0.6em]
        +\infty,& \lambda=1,
        \end{cases}
        \qquad
        r_1=q-\gamma,
\]
where \(r_1\) is the fixed point of \(\Phi({}^tA_1;\cdot)\) different
from \(-1\). Since \(\gamma>1\), these formulas give
\[
        r_0\ge -1\iff \lambda\ge1,
        \qquad
        r_1\ge -1\iff q\ge\gamma-1.
\]
As \(\alpha=\min\{0,r_0,r_1\}\), the proposition follows.
\end{proof}
\subsubsection{From linear coordinates to projective linear coordinates}
We now quotient the linear parameter space by the projective action
generated by the transformations \(T_t\) in \eqref{eqTa}. If
\[
        B_j=T_tA_jT_t^{-1},
        \qquad t>0,
\]
and if \(A_j\) have linear parameters \((\lambda,\gamma,q)\), then
\(B_j\) have linear parameters
\[
        (\lambda_t,\gamma_t,q_t)
        =
        \bigl(\lambda,\ 1+t(\gamma-1),\ tq\bigr).
\]
Thus \(\lambda\) is invariant along projective orbits, while
\(\gamma-1\) and \(q\) are multiplied by the same factor.

 In order to quotient out the projective parameter \(t\), we choose the
cross-section \(q=1\). On this cross-section the admissible parameters are
\[
\mathcal A=\{(\lambda,\gamma):\lambda>0,\ 1<\gamma\},
\]
where \((\lambda,\gamma)\) represents the orbit of
\((\lambda,\gamma,1)\). The trapping region on this cross-section is
\[
\mathcal A_{\mathrm{tr}}
=
\{(\lambda,\gamma):\lambda\ge1,\ 1<\gamma\le2\}.
\]
\subsubsection{Comparing linear coordinates and potential theoretic coordinates}
Not all de Rham systems can be written in potential theoretic coordinates. In fact, the map $\mathcal{P}\xrightarrow{P}\mathcal{D}$ mapping $(\lambda,\mu,t)$ to the data provided by \eqref{eq:B0} and \eqref{eq:B1} is neither surjective, nor injective.
\begin{proposition}[The capacitary subfamily inside the universal family]
\label{prop:cap-subfamily}
The capacitary/projective parametrization with $\lambda>0$, $\mu>0$ and
$t>0$ covers exactly the region
\[
\mathcal{L}_{\text{pt}}=\left\{
(\lambda,\gamma,q):
\lambda>0,\ \gamma>1,\ q>0,\ 
q\ge \frac{4(\gamma-1)}{\lambda}
\right\}
\]
of the universal parameter space $\mathcal{L}$. That is, $P(\mathcal{P})\subsetneq\mathcal{D}$.

Moreover, under the map \(L^{-1}\circ P:\mathcal P\to{\mathcal L}_{\text{pt}}\), the points where $q>\frac{4(\gamma-1)}{\lambda}$ have two preimages in $\mathcal{P}$, while those for which $q=\frac{4(\gamma-1)}{\lambda}$ have just one preimage there. The latter case corresponds to $\lambda=\mu$ in $\mathcal{P}$.
\end{proposition}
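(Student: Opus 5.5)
The plan is to compute the linear coordinates of the data $P(\lambda,\mu,t)$ explicitly, and then invert the resulting map $\mathcal P\to\mathcal L$ by solving a quadratic equation in $\mu$. By \eqref{eq:B0}, the matrix $B_0$ already has the normal form of $A_0$ in \eqref{eqLinPar}: upper-left entry $1$, upper-right entry $0$, lower-right entry $\lambda$. So the first linear coordinate is the potential-theoretic $\lambda$ itself. The second one is read off from
\[
\gamma-\lambda=tC+(1-t)(1-\lambda).
\]
Substituting $C=(\lambda+\mu-\lambda\mu)/(\lambda+\mu)$ from \eqref{eqAzoeruno} (the formula is purely algebraic, so it is used also when $C\le0$), this simplifies to $C+\lambda-1=\lambda^2/(\lambda+\mu)$. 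Hence $\gamma=1+t\lambda^2/(\lambda+\mu)$, in agreement with \eqref{eqProjParUno}.

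For $B_1$ in \eqref{eq:B1}, I normalize by multiplying by $\lambda t$, so that the upper-right entry becomes $1$ as in the form of $A_1$. The upper-left entry then gives
\[
q-1=\lambda t\Bigl(\frac1\mu+\frac1\lambda\Bigr)-1,\qquad\text{so}\qquad q=\frac{t(\lambda+\mu)}{\mu}.
\]
As a consistency check, I will verify that the normalized lower-right entry equals $1+t(\lambda+C-1)=\gamma$, and that the lower-left entry equals $q-\gamma$. This confirms that the data really have the form \eqref{eqLinPar} and that nothing is lost under projective rescaling. It also agrees with $r_1=q-\gamma$ from \eqref{eqProjParTre}. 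Thus
\[
L^{-1}\circ P(\lambda,\mu,t)=\Bigl(\lambda,\ 1+\frac{t\lambda^2}{\lambda+\mu},\ \frac{t(\lambda+\mu)}{\mu}\Bigr).
\]

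Inversion: fix $(\lambda,\gamma,q)\in\mathcal L$ and write $u=\gamma-1>0$. The parameter $t$ cancels in the ratio
\[
\frac{q}{u}=\frac{(\lambda+\mu)^2}{\lambda^2\mu}.
\]
Once $\mu$ is known, $t=q\mu/(\lambda+\mu)>0$ is uniquely determined, and it then reproduces $\gamma$ automatically. So the preimages of $(\lambda,\gamma,q)$ correspond bijectively to the positive roots $\mu$ of
\[
\mu^2+\Bigl(2\lambda-\frac{\lambda^2q}{u}\Bigr)\mu+\lambda^2=0.
\]
The function $\mu\mapsto(\lambda+\mu)^2/\mu$ on $(0,\infty)$ has minimum $4\lambda$, attained only at $\mu=\lambda$, and tends to $+\infty$ at both ends. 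Therefore solutions exist if and only if $q/u\ge 4/\lambda$, that is, $q\ge 4(\gamma-1)/\lambda$. When the inequality is strict there are exactly two positive roots; their product is $\lambda^2$ and their sum is positive. When equality holds there is the single double root $\mu=\lambda$.

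This gives both the description of the image $\mathcal L_{\mathrm{pt}}$ and the count of preimages. Strictness of $P(\mathcal P)\subsetneq\mathcal D$ follows because, for example, $(\lambda,\gamma,q)=(1,2,1)\in\mathcal L$ violates $q\ge4(\gamma-1)/\lambda$. Finally, $L$ is a bijection onto $\mathcal D$, since the normal forms in \eqref{eqLinPar} fix the projective scaling. The only real obstacle is the bookkeeping in normalizing $B_1$ and confirming that all its entries are consistent with the linear form; everything else is elementary.
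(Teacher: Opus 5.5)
Your proposal is correct and follows the paper's proof. The paper derives the same formulas $\gamma=1+t\lambda^2/(\lambda+\mu)$ and $q=t(\lambda+\mu)/\mu$, then inverts via $x+\frac1x+2=R$ with $x=\mu/\lambda$ and $R=\lambda q/(\gamma-1)$; this is your quadratic in $\mu$ divided by $\lambda\mu$, and your explicit checks of the remaining entries of the normalized $B_1$ and of the bijectivity of $L$ fill in details the paper leaves implicit.
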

\begin{proof}
    Let \((\lambda,\mu,t)\in\mathcal P\), and let
\((\lambda,\gamma,q)\) be the corresponding linear parameters.  From
\eqref{eq:B0} and \eqref{eq:B1}, after normalizing the matrix \(B_1\) so
that its upper right entry is \(1\), one obtains
\[
        \gamma
        =
        1+t\frac{\lambda^2}{\lambda+\mu},
        \qquad
        q
        =
        t\frac{\lambda+\mu}{\mu}.
\]
Hence
\[
        \frac{q}{\gamma-1}
        =
        \frac{(\lambda+\mu)^2}{\mu\lambda^2},
\]
which implies 
\((\lambda,\gamma,q)\in\mathcal{L}_{\text{pt}}\).

Conversely, suppose
\[
        \lambda>0,\qquad \gamma>1,\qquad q>0,
        \qquad
        q\ge \frac{4(\gamma-1)}{\lambda}.
\]
Set
\[
        R=\frac{\lambda q}{\gamma-1}.
\]
Then \(R\ge4\). Choose \(x>0\) such that
\begin{equation}\label{eqSDEq}
        x+\frac1x+2=R,
\end{equation}
and put
\[
        \mu=\lambda x,
        \qquad
        t=(\gamma-1)\frac{\lambda+\mu}{\lambda^2}.
\]
Then \(\mu>0\), \(t>0\), and the above formulas give
\[
        1+t\frac{\lambda^2}{\lambda+\mu}=\gamma
\]
and
\[
        t\frac{\lambda+\mu}{\mu}
        =
        (\gamma-1)\frac{(\lambda+\mu)^2}{\mu\lambda^2}
        =
        q.
\]
Thus every point satisfying the stated inequality is obtained from
potential theoretic parameters.

The last assertion follows from a simple analysis of the equation \eqref{eqSDEq}, having as unknown $x=\frac{\mu}{\lambda}$.
\end{proof}

\subsection{Improving the results by quotienting out the projective coordinate}
\label{SSectOpt}

The conclusions of Theorem~\ref{theoMain}, as well as those of
Okamura's Theorem A, are invariant under conjugation by the maps \(T_t\).
It is therefore natural to optimize the constants \(\theta_0,\theta_1\)
along each projective orbit.

We use the projectivized linear coordinates introduced in
\S~\ref{SSectCS}. Thus
\[
        \mathcal A_{\mathrm{tr}}
        =
        \{(\lambda,\gamma):\lambda\ge1,\ 1<\gamma\le2\},
\]
and the point \((\lambda,\gamma)\in\mathcal A_{\mathrm{tr}}\) represents
the orbit of the normalized linear parameters
\[
        (\lambda,\gamma,1)\in\mathcal L_{\mathrm{tr}}.
\]
Under conjugation by \(T_t\), this point is sent to
\[
        (\lambda_t,\gamma_t,q_t)
        =
        \bigl(\lambda,\ 1+t(\gamma-1),\ t\bigr),
        \qquad t>0.
\]
Let \(\theta_0(t)\) and \(\theta_1(t)\) denote the constants in
\eqref{eqTheta} corresponding to these parameters. We define the
projectively optimized constants by
\[
        \Theta_0(\lambda,\gamma)
        =
        \sup_{t>0}\theta_0(t),
        \qquad
        \Theta_1(\lambda,\gamma)
        =
        \inf_{t>0}\theta_1(t).
\]

\begin{theorem}[Projectively optimized estimates]
\label{theoProjOpt}
Let \((\lambda,\gamma)\in\mathcal A_{\mathrm{tr}}\), and let \(f\) be
any de Rham solution whose data lie on the projective orbit represented
by \((\lambda,\gamma)\). Then the capacitary estimate
\eqref{eqSynthTwo} can be sharpened along the projective orbit to
\begin{equation}\label{eqSynthTwoBis}
\frac{\Theta_0(\lambda,\gamma)}{\log2}
\le
\inf\left\{
s\ge0:
\exists K\subseteq[0,1]\ \text{\rm Borel},\
\mu_f(K)=1,\
\Cap_s(\Lambda^{-1}(K))=0
\right\}
\le
\frac{\Theta_1(\lambda,\gamma)}{\log2}.
\end{equation}
The same replacement of \(\theta_0,\theta_1\) by
\(\Theta_0,\Theta_1\) gives the corresponding projectively optimized
version of Okamura's Hausdorff-dimensional estimate.

Moreover:
\begin{enumerate}
\item[\rm(i)]
The equality
\[
        \Theta_0(\lambda,\gamma)=\Theta_1(\lambda,\gamma)
\]
holds if and only if
\begin{equation}\label{eqSharpCurveA}
        \lambda(2-\gamma)=1.
\end{equation}
Equivalently, in \(\mathcal A_{\mathrm{tr}}\), this is the curve
\[
        1<\gamma<2,
        \qquad
        \lambda=\frac1{2-\gamma}.
\]
On this curve,
\[
        \Theta_0(\lambda,\gamma)
        =
        \Theta_1(\lambda,\gamma)
        =
        s(2-\gamma)
        =
        s\left(\frac1\lambda\right).
\]

\item[\rm(ii)]
The optimized upper constant is trivial, namely
\[
        \Theta_1(\lambda,\gamma)=\log2,
\]
if and only if
\begin{equation}\label{eqBadRegionCondition}
        \left(\frac1\lambda-\frac12\right)
        \left(2-\gamma-\frac12\right)
        \le0.
\end{equation}
\end{enumerate}
\end{theorem}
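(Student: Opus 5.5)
The plan is to reduce everything to an explicit computation of the dynamical invariants along a projective orbit. The key observation is that two of the three relevant probabilities are constant along the orbit, while the third sweeps out all of $(0,1)$.

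\emph{Step 1: invariance along the orbit.} Let $f$ solve \eqref{eqDR} with data $(\Phi(A_0;\cdot),\Phi(A_1;\cdot))$, and put $B_j=T_tA_jT_t^{-1}$. Then $f_t:=\Phi(T_t;\cdot)\circ f$ is increasing, left continuous, fixes $0$ and $1$, and satisfies \eqref{eqDR} with data $\Phi(B_j;\cdot)$. By the uniqueness part of Theorem~\ref{theodeRhamExists}, $f_t$ is \emph{the} solution for these data. Since $\Phi(T_t;\cdot)$ is a $C^1$ diffeomorphism of $[0,1]$ with $C^1$ inverse, the measures $\mu_{f_t}$ and $\mu_f$ have the same null sets on Borel subsets of the $x$-variable. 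Hence $\mu_{f_t}(K)=1$ if and only if $\mu_f(K)=1$. The set $K$ lives in the domain variable, so its dyadic capacity is untouched; Lemma~\ref{lemmaCapDnonD} is not even needed.

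The trapping condition is also invariant along the orbit: the cross-section condition $\gamma\le2$ gives $q_t=t\ge t(\gamma-1)=\gamma_t-1$. Therefore the middle quantity in \eqref{eqSynthTwo}, and Okamura's Hausdorff quantity, are the same for every $t$. Applying \eqref{eqSynthTwo} (Theorem A) and Theorem~\ref{theoMain} for each $t$, then taking $\sup_t$ on the left and $\inf_t$ on the right, yields \eqref{eqSynthTwoBis} and its Hausdorff analogue.

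\emph{Step 2: compute $p_0$ at $0,r_0,r_1$.} Use the formulas from the proof of Proposition~\ref{propTrapped} with $(\lambda,\gamma_t,q_t)=(\lambda,1+t(\gamma-1),t)$.
\begin{itemize}
\item At $y=0$: $p_0(0)=1/\gamma_t=1/(1+t(\gamma-1))$.
\item At $y=r_0$: here $r_0+1=(\gamma_t-1)/(\lambda-1)$ and $r_0+\gamma_t=\lambda(\gamma_t-1)/(\lambda-1)$, so $p_0(r_0)=1/\lambda$. This also holds for $\lambda=1$, where $r_0=+\infty$ and $p_0(+\infty)=1$.
\item At $y=r_1$: here $r_1=q_t-\gamma_t=t(2-\gamma)-1$, so $r_1+1=t(2-\gamma)$ and $r_1+\gamma_t=t$, giving $p_0(r_1)=2-\gamma$.
\end{itemize}
Since $\gamma_t>1$, the map $y\mapsto p_0(y)$ is increasing and continuous on $(-1,\infty]$. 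Therefore $p_0([\alpha,\beta])$ is the closed interval $J_t$ spanned by the three numbers $1/\gamma_t$, $1/\lambda$, $2-\gamma$.

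\emph{Step 3: optimize over $t$.} As $t$ ranges over $(0,\infty)$, $1/\gamma_t$ ranges over all of $(0,1)$. Let $J$ be the interval spanned by $1/\lambda$ and $2-\gamma$.
\begin{itemize}
\item Every $J_t$ contains $J$.
\item Choosing $t$ so that $1/\gamma_t\in J$ gives $J_t=J$. Such a choice exists because $J\cap(0,1)\neq\varnothing$; the degenerate case $J=\{1\}$ cannot occur since $2-\gamma<1$.
\end{itemize}
Enlarging the interval can only decrease $\min s$ and increase $\max s$. Hence
\[
\Theta_0=\min_J s=\min\{s(1/\lambda),s(2-\gamma)\}
\]
by concavity of $s$, and $\Theta_1=\max_J s$.

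For (ii): $\Theta_1=\log2$ if and only if $1/2\in J$. This is exactly the condition that $1/\lambda-\tfrac12$ and $2-\gamma-\tfrac12$ do not have the same strict sign, i.e.\ \eqref{eqBadRegionCondition}.

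For (i): $\Theta_0=\Theta_1$ means $s$ is constant on $J$. Since $s$ is strictly concave and nonconstant on every nondegenerate interval, this holds if and only if $J$ is a single point, i.e.\ $1/\lambda=2-\gamma$. Then the common value is $s(1/\lambda)=s(2-\gamma)$. Because $1/\lambda>0$, this forces $\gamma<2$, which gives the stated curve.

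\emph{Main obstacle.} The step needing most care is Step 1. One must check that the conjugated function $f_t$ really is the unique normalized left-continuous solution, including in the boundary case $\alpha=-1$ where $p_0$ may take the value $0$. One must also check that the endpoint cases $\lambda=1$ and $\gamma=2$ are handled consistently in Step 2, where $p_0(r_0)=1$ and $p_0(r_1)=0$ respectively, values at which $s$ vanishes. The remaining algebra is routine.
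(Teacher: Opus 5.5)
Your proposal is correct and follows the same route as the paper: you compute $p_0(r_0)=1/\lambda$, $p_0(r_1)=2-\gamma$ and $p_0(0)=1/(1+t(\gamma-1))$, note that only the last value moves along the orbit and sweeps out $(0,1)$, place it inside $\operatorname{conv}\{1/\lambda,2-\gamma\}$ to realize both $\Theta_0$ and $\Theta_1$, and read off (i) and (ii) from the strict concavity and symmetry of $s$. Your Step 1 is a useful addition: it writes $f_t=\Phi(T_t;\cdot)\circ f$, so that $\mu_{f_t}$ and $\mu_f$ have the same null sets while $K$ is unchanged, and it checks that trapping is preserved along the orbit, which makes explicit the orbit invariance the paper only asserts.
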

\begin{proof}
We first compute \(\Theta_0,\Theta_1\). For the parameters
\[
        (\lambda_t,\gamma_t,q_t)
        =
        \bigl(\lambda,\ 1+t(\gamma-1),\ t\bigr),
\]
the three relevant \(p\)-coordinates of Okamura's interval are
\begin{equation}\label{eqThreePar}
        p_0(r_0)=\frac1\lambda,
        \qquad
        p_0(r_1)=2-\gamma,
        \qquad
        p_0(0)=\frac{1}{1+t(\gamma-1)}.
\end{equation}
Indeed,
\[
        p_0(y)=\frac{y+1}{y+\gamma_t}.
\]
Moreover
\[
        r_0(t)=\frac{\gamma_t-\lambda}{\lambda-1}
        \quad(\lambda\neq1),
        \qquad
        r_1(t)=q_t-\gamma_t.
\]
The expressions in \eqref{eqThreePar} easily follow by direct calculation. 

The first two values correspond to the fixed points \(r_0,r_1\), while
the last one corresponds to \(y=0\). The last point is the moving point
along the projective orbit. As \(t\) ranges over \((0,\infty)\), it
ranges over the whole interval \((0,1)\).

Set
\[
        J_{\lambda,\gamma}
        =
        \operatorname{conv}
        \left\{
        \frac1\lambda,\ 2-\gamma
        \right\}.
\]
Placing the moving point outside \(J_{\lambda,\gamma}\) only enlarges the
interval on which the entropy function is minimized or maximized.
Therefore the optimal choice, both for maximizing the lower constant and
for minimizing the upper constant, is to place
\[
        \frac{1}{1+t(\gamma-1)}
\]
inside \(J_{\lambda,\gamma}\). It follows that
\begin{equation}\label{eqThetaOpt}
        \Theta_0(\lambda,\gamma)
        =
        \min_{p\in J_{\lambda,\gamma}}s(p),
        \qquad
        \Theta_1(\lambda,\gamma)
        =
        \max_{p\in J_{\lambda,\gamma}}s(p).
\end{equation}

Equivalently, if
\[
        a_{\lambda,\gamma}
        =
        \min\left\{\frac1\lambda,\ 2-\gamma\right\},
        \qquad
        b_{\lambda,\gamma}
        =
        \max\left\{\frac1\lambda,\ 2-\gamma\right\},
\]
then
\[
        J_{\lambda,\gamma}
        =
        [a_{\lambda,\gamma},b_{\lambda,\gamma}],
\]
and
\[
        \Theta_0(\lambda,\gamma)
        =
        \min\{s(a_{\lambda,\gamma}),s(b_{\lambda,\gamma})\},
\]
whereas
\[
        \Theta_1(\lambda,\gamma)
        =
        \begin{cases}
        \log2,
        &\text{if } \dfrac12\in J_{\lambda,\gamma},\\[0.6em]
        \max\{s(a_{\lambda,\gamma}),s(b_{\lambda,\gamma})\},
        &\text{if } \dfrac12\notin J_{\lambda,\gamma}.
        \end{cases}
\]

The optimizing projective parameters are explicit. Put
\[
        m(t)=\frac{1}{1+t(\gamma-1)}.
\]
Then \(m(t)\in J_{\lambda,\gamma}\) if and only if \(t\) is optimal for
both problems. More precisely, for every
\[
        m\in J_{\lambda,\gamma}\cap(0,1),
\]
the value
\begin{equation}\label{eqOptimalQ}
        t=q=
        \frac{1-m}{m(\gamma-1)}
\end{equation}
places the moving point at \(m\), and hence realizes both
\(\Theta_0(\lambda,\gamma)\) and \(\Theta_1(\lambda,\gamma)\). Thus the
optimizing value of the projective coordinate is generally not unique:
the full set of optimal values is obtained from
\eqref{eqOptimalQ} as \(m\) ranges over
\(J_{\lambda,\gamma}\cap(0,1)\).

The capacitary estimate \eqref{eqSynthTwoBis} follows immediately by
applying \eqref{eqSynthTwo} along the projective orbit and then optimizing
the lower and upper constants over \(t>0\).

We next identify the locus where the optimized lower and upper constants
coincide. Since the entropy function \(s\) is strictly concave on
\((0,1)\), symmetric with respect to \(1/2\), strictly increasing on
\((0,1/2)\), and strictly decreasing on \((1/2,1)\), the equality
\[
        \Theta_0(\lambda,\gamma)
        =
        \Theta_1(\lambda,\gamma)
\]
holds if and only if the interval \(J_{\lambda,\gamma}\) degenerates to
one point. Hence
\[
        \frac1\lambda=2-\gamma.
\]
This ends the proof of (i).

Finally, we identify the region where the optimized upper estimate is
still trivial. By \eqref{eqThetaOpt},
\[
        \Theta_1(\lambda,\gamma)=\log2
\]
if and only if
\[
        \frac12\in
        \operatorname{conv}
        \left\{
        \frac1\lambda,\ 2-\gamma
        \right\}.
\]
Equivalently,
\[
        \left(\frac1\lambda-\frac12\right)
        \left(2-\gamma-\frac12\right)
        \le0.
\]
This ends the proof of (ii).
\end{proof}

\subsection{Some open problems}
\subsubsection{Problems related to Theorem \ref{theoMain}}
The natural meta-problem is to determine, for
\((\lambda,\gamma)\in\mathcal A_{\mathrm{tr}}\), the exact value of
\[
\Gamma(\lambda,\gamma):=\inf\left\{
s\ge0:
\exists K\subseteq[0,1]\ \text{Borel},\ 
\mu_f(K)=1,\ 
\Cap_s(\Lambda^{-1}(K))=0
\right\},
\]
where \(f\) corresponds to \eqref{eqDR} with the parameters
\((\lambda,\gamma)\).
By the results of the previous section, we only know the answer when
\(\lambda(2-\gamma)=1\). The case \(\lambda=\mu\) is especially relevant
from the viewpoint of potential theory. In the normalized section
\(q=1\), it corresponds to \(\gamma=1+\lambda/4\). The only presently
understood point on this diagonal is the logarithmic case
\(\lambda=\mu=1\), for which \(\Gamma=0\).

\begin{enumerate}
    \item[(i)] Find the exact value of \(\Gamma(\lambda,\gamma)\) along
    the diagonal \(\lambda=\mu=2^\sigma\), \(0<\sigma<1\). In the
    normalized section \(q=1\), this corresponds to
    \(\gamma=1+2^{\sigma-2}\).

    \item[(ii)] In the logarithmic case \(\lambda=\mu=1\), we expect that,
    by using suitable kernels for the definition of the capacity, one
    could find some sharper information. The same problem can be posed
    regarding Theorem A, using Hausdorff measures with gauges which are
    finer than the power gauge.

    \item[(iii)] When one leaves the trapping range
    \(\mathcal A_{\mathrm{tr}}\), the left-continuous solution has jumps
    at the dyadic points. Its range is therefore obtained from \([0,1]\)
    by deleting the corresponding jump intervals, with endpoint
    conventions depending on the chosen left-continuous normalization.
    One expects a corresponding collapse in the minimal capacitary size
    of a Borel set carrying full \(\mu_f\)-mass. The problem is to make
    this expectation precise, or to disprove it.
    \item[(iv)] Okamura's lower estimate states that
\[
\dim_H(K)<\frac{\theta_0}{\log2}
\quad\Longrightarrow\quad
\mu_f(K)=0.
\]
Determine whether this estimate admits a capacitary refinement at the
lower endpoint. More precisely, is it true that, for every Borel set
\(K\subseteq[0,1]\),
\[
\Cap_{\frac{\theta_0}{\log2}}
\bigl(\Lambda^{-1}(K)\bigr)=0
\quad\Longrightarrow\quad
\mu_f(K)=0?
\]
Equivalently, does \(\mu_f\) vanish on all sets that are polar for the
critical dyadic capacity
\(\Cap_{\theta_0/\log2}\)? If this fails in general, characterize the
parameters for which it holds and determine the appropriate finer
capacity or gauge at the lower endpoint.
\end{enumerate}
Taken together, Theorems \ref{theodrCap} and \ref{theoMain} (or Theorem A)
imply that the distributional derivatives of the functions
\[
c(x):=\Cap_s(\Lambda^{-1}([0,x)))
\]
and
\[
q(x):=\Qap_{\frac{1-s}{2}}([0,x))
\]
are mutually singular for \(0\le s<1\). The function \(q\), in fact,
depends smoothly on \(x\) for \(x>0\). This is in the same spirit,
although much easier to solve, as
\cite[\S 16.3, Question 16.7]{LP2016}, where it is asked if two
different codings, dyadic vs. triadic, of the Cantor set produce
equilibrium measures which are mutually singular.
\subsubsection{Nonlinear capacitary de Rham systems}
In \cite{ARSW2014} recursive formulas for capacities on tree boundaries are proved in a rather general framework, and it is shown how, for a more restricted class, tree capacities are related to Riesz type capacities on metric spaces. The de Rham systems in Theorem \ref{theodrCap} are modeled on the dyadic decomposition of an interval, and on linear capacities, but we might think of different decompositions of spaces having larger dimension, or of nonlinear capacities. 

We make here some considerations related to nonlinear potential theories related to the dyadic decomposition of the interval.
They lead to de Rham systems whose defining maps are in general not fractional linear.

Fix $1<p<\infty$ and let $p'$ be its conjugate exponent, $1/p+1/p'=1$. For a weight $\wt$ on $E(T_\ast)$,  we define the $(p,\wt)$-capacity of a Borel set $E\subseteq\partial T_\ast$ to be
\[
\Cap_*^{\wt}(E)=\inf\left\{\sum_{{\mathrm a}\in E(T_*)} f({\mathrm a})^p\wt({\mathrm a}):\ f\ge 0,\ \sum_{{\mathrm a}\in P_*(\zeta)} f({\mathrm a})\wt({\mathrm a})\ge 1\ \text{for all }\zeta\in E\right\}.
\]
The definition of $\Cap_*^{\wt}$ indeed depends on $p$. 
A special case of Theorem 30 in \cite{ARSW2014} is
\[
    \Cap^\wt_{\mathrm{a}}(E)=\frac{\Cap^\wt_{\mathrm{a0}}(E\cap\partial T_{a0})+\Cap^\wt_{\mathrm{a1}}(E\cap\partial T_{a1})}{\left[1+\wt(a)(\Cap^\wt_{\mathrm{a0}}(E\cap\partial T_{a0})+\Cap^\wt_{\mathrm{a1}}(E\cap\partial T_{a1}))^{p'-1}\right]^{p-1}}.
\]
Let, as before, $\wt=\wt^{\lambda,\mu}=\lambda^{|a|_0}\mu^{|a|_1}$. Then we have the nonlinear version of \eqref{eqRicorso},
\[
\Cap^{\lambda,\mu}_\ast(\tau_0(E)\cup\tau_1(F))=
\frac{\lambda^{1-p}\Cap_\ast^{\lambda,\mu}(E)
+\mu^{1-p}\Cap_\ast^{\lambda,\mu}(F)}
{\left[
1+
\left(
\lambda^{1-p}\Cap_\ast^{\lambda,\mu}(E)
+\mu^{1-p}\Cap_\ast^{\lambda,\mu}(F)
\right)^{p'-1}
\right]^{p-1}}.
\]
From this it is easy to compute
\[
C:=\Cap_\ast^{\lambda,\mu}(\partial T_\ast)=\begin{cases}
    \left(1-\left(\lambda^{1-p}+\mu^{1-p}\right)^{1-p'}\right)^{p-1}\text{ if }\lambda^{1-p}+\mu^{1-p}>1,\\
    0\text{ otherwise.}
\end{cases}
\]
We shall consider here the case $C>0$, which is the interesting one from the viewpoint of potential theory. After defining 
\begin{equation}\label{eqdRnl}
f(x):=C^{-1}\Cap_\ast^{\lambda,\mu}(\Lambda^{-1}([0,x))),
\end{equation}
proceeding as in the case $p=2$ one shows that $f$ is a solution of the de Rham system \eqref{eqDR}, with
\begin{equation}\label{eqFzo}
F_0(x)=\frac{x}{[\lambda+C^{p'-1}x^{p'-1}]^{p-1}},\ F_1(x)=\frac{\lambda^{1-p}+\mu^{1-p}x}{[1+C^{p'-1}(\lambda^{1-p}+\mu^{1-p}x)^{p'-1}]^{p-1}}.
\end{equation}
The fact that $f$ is strictly increasing, left continuous, and $f(j)=j$ for $j=0,1$ follows from basic properties in potential theory. From potential theory it also follows that $f$ is continuous if $\lambda,\mu\ge1$. In fact, if $\zeta\in\partial T_\ast$, 
\[
\Cap^{(\wt,p)}_\ast(\{\zeta\})=\frac{1}{\left(\sum_{a\in P_\ast(\zeta)}\wt(a)\right)^{p-1}},
\]
which vanishes for all $\zeta$ if $\wt=\wt^{\lambda,\mu}$ with $\lambda,\mu\ge1$. In fact,
\[
\begin{split}
\Cap_\ast^{\lambda,\mu}(\Lambda^{-1}([0,x)))
&\le
\lim_{y\to x^+}
\Cap_\ast^{\lambda,\mu}(\Lambda^{-1}([0,y)))        \\
&=
\Cap_\ast^{\lambda,\mu}(\Lambda^{-1}([0,x]))        \\
&\le
\Cap_\ast^{\lambda,\mu}(\Lambda^{-1}([0,x)))
+
\Cap_\ast^{\lambda,\mu}(\Lambda^{-1}(\{x\}))        \\
&=
\Cap_\ast^{\lambda,\mu}(\Lambda^{-1}([0,x))).
\end{split}
\]
whenever $\Cap_\ast^{\lambda,\mu}(\Lambda^{-1}(\{x\}))=0$.

Also, from the de Rham equation for $f$ itself it is easy to deduce that $f$ has jumps at the dyadic points if $\lambda<1$. We summarize these considerations in a statement.
\begin{theorem}\label{theoCapNl}
  Let $1<p<\infty$.  Suppose $\lambda,\mu>0$ are such that $C>0$, and let $F_0,F_1:[0,1]\to[0,1]$ be defined by \eqref{eqFzo}. Then, the function $f:[0,1]\to[0,1]$ defined in \eqref{eqdRnl} is strictly increasing, left continuous, and it satisfies
  \[
  f(g_j(x))=F_j(f(x)),\ f(j)=j,
  \]
  for $j=0,1$. If $\lambda,\mu\ge1$, then $f$ is continuous. If $0<\lambda<1$, and $x\in[0,1)$ is a dyadic point, then $f$ has jump discontinuities at $x$.
\end{theorem}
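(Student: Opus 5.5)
The plan follows the proof of Theorem \ref{theodrCap}, with the linear recursion \eqref{eqRicorso} replaced by its nonlinear version. Put $c(x)=\Cap_\ast^{\lambda,\mu}(\Lambda^{-1}([0,x)))$, where the capacity is now the $(p,\wt)$-capacity.

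\textbf{Basic properties and the functional equation.}
\begin{enumerate}
\item[1.] Monotonicity of $c$ is immediate. Left continuity follows, exactly as before, from continuity of capacity along increasing sequences of sets. This is a property of the axiomatic capacities in \cite{AH1996}, valid for $1<p<\infty$.
\item[2.] The identity $\Lambda^{-1}([g_j(0),g_j(x)))=\tau_j(\Lambda^{-1}([0,x)))$ reduces both $c(g_0(x))$ and $c(g_1(x))$ to capacities of sets of the form $\tau_0(E)\cup\tau_1(F)$. For $c(g_0(x))$ take $E=\Lambda^{-1}([0,x))$ and $F=\varnothing$. For $c(g_1(x))$ take $E=\Lambda^{-1}([0,1))$ and $F=\Lambda^{-1}([0,x))$.
\item[3.] The nonlinear recursion then gives $c(g_j(x))$ as an explicit function of $c(x)$ and $c(1)$. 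Writing $c=Cf$ and collecting the powers of $C$ should produce exactly the maps in \eqref{eqFzo}. For this step I first need $c(1)=C$.
\end{enumerate}

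\textbf{The identity $c(1)=C$.} As in the linear case, I would use the dual characterization of $p$-capacity. In nonlinear form, $\Cap(E)^{1/p}=\sup\{\mu(E)/\mathcal E_p(\mu)^{1/p'}\}$, with $\mathcal E_p(\mu)=\sum_a \mu(\partial T_a)^{p'}\wt(a)^{1-p'}$ up to normalization, as in \cite{AH1996}.
\begin{enumerate}
\item[1.] Test against the Bernoulli measure with $\rho_0\propto\lambda^{1-p}$ and $\rho_1\propto\mu^{1-p}$. Its energy is a geometric series, finite precisely when $\lambda^{1-p}+\mu^{1-p}>1$, and it recovers $C$. Alternatively, and more simply, $C$ is the positive fixed point of the recursion, and the equation for $C$ has only the roots $0$ and that positive value, as in Proposition \ref{propTotCap}.
\item[2.] Restricting this measure to $\Lambda^{-1}([0,x_n))$ with $x_n\uparrow 1$ and using monotone convergence shows $c(1)\ge C$. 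Since $c(1)\le C$ trivially, $c(1)=C$ and hence $f(1)=1$.
\end{enumerate}

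\textbf{Strict monotonicity.} I would deduce this from the functional equation, since each $F_j$ is strictly increasing. Alternatively, every dyadic subinterval carries a set of positive capacity (a rescaled copy of $\partial T_\ast$), and subadditivity then gives $c(y)>c(x)$ for $x<y$.

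\textbf{Continuity and jumps.}
\begin{enumerate}
\item[1.] If $\lambda,\mu\ge1$, then $\sum_{a\in P_\ast(\zeta)}\wt(a)=\infty$ for every $\zeta$, so points have zero capacity. The sandwich displayed before the statement, using outer regularity $\lim_{y\downarrow x}c(y)=\Cap(\Lambda^{-1}([0,x]))$, together with subadditivity, gives continuity.
\item[2.] If $\lambda<1$, I would argue through the equation, as in Theorem \ref{theodeRhamExists}. The right limit at $1/2$ is $F_1(\lim F_0^n)$, and the left limit is $F_0(\lim F_1^n)=F_0(1)=F_1(0)$.
\item[3.] Since $F_0(x)=x/[\lambda+C^{p'-1}x^{p'-1}]^{p-1}$ and $\lambda<1$, the derivative at $0$ is $\lambda^{1-p}>1$. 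Thus $0$ is repelling, and $F_0^n(x)$ converges to a positive fixed point $\zeta_0$ for $x>0$. Then $F_1(\zeta_0)>F_1(0)$, which is a jump at $1/2$.
\item[4.] Transporting by $F_w$ gives jumps at all dyadic points $g_w(1/2)$. At $x=0$, the right limit of $f$ is $\zeta_0>0=f(0)$.
\end{enumerate}

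\textbf{Main obstacle.} I expect the delicate step to be the dual/energy computation fixing $c(1)=C$ in the nonlinear setting, because the duality is no longer quadratic and the normalizations of $\mathcal E_p$ must match the recursion from \cite{ARSW2014}. I would first try to avoid it entirely. Left continuity gives $c(1)=\lim_n\Cap(\Lambda^{-1}([0,x_n)))$. The recursion gives $c(g_1(x))$ in terms of $c(1)$. Letting $x\to1$ should then show that $c(1)$ is a fixed point of the same map as $C$. Separately, $c(1)\ge c(1/2)>0$, so the zero fixed point is excluded and $c(1)=C$.
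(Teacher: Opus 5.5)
Your plan follows the paper's own argument, which the paper only sketches. The ingredients match: the nonlinear recursion for $c(g_j(x))$, $c(1)=C$ via a Bernoulli test measure as in the case $p=2$, vanishing point capacities plus the sandwich for $\lambda,\mu\ge1$, and the functional equation for the jumps. Your jump argument is correct and more explicit than the paper's. The positive fixed point of $F_0$ is $\zeta_0=(1-\lambda)^{p-1}/C$, which lies in $(0,1)$ because $K:=\lambda^{1-p}+\mu^{1-p}>\lambda^{1-p}$. Three things need fixing.

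\textbf{The energy normalization is wrong for this paper's capacity.} Here the cost is $\sum f^p\wt$ and the constraint is $\sum_{a\in P_\ast(\zeta)}f(a)\wt(a)\ge1$. Let $\nu$ be a probability measure with $\nu(E)=1$ and let $f$ be admissible for $E$. Then
\[
1\le\sum_a f(a)\wt(a)\,\nu(\partial T_a)\le\|f\|_{L^p(\wt)}\Bigl(\sum_a\nu(\partial T_a)^{p'}\wt(a)\Bigr)^{1/p'},
\]
so the energy is $\sum_a\nu(\partial T_a)^{p'}\wt(a)$. Your factor $\wt(a)^{1-p'}$ belongs to the convention in which $\wt$ does not appear in the constraint. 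With it, the Bernoulli computation does not produce $C$; for instance, it gives finite energy when $K=1$.

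With the correct energy and $\rho_j\propto w_j^{1-p}$, each generation contributes $\rho_0^{p'}\lambda+\rho_1^{p'}\mu=K^{1-p'}$ (use $(1-p)p'=-p$). Hence $\mathcal E(\nu)=(1-K^{1-p'})^{-1}$, and every admissible $f$ satisfies $\|f\|_{L^p(\wt)}^p\ge\mathcal E(\nu)^{1-p}=C$. Only this Hölder inequality is needed, not the duality theorem. Since $\nu$ has no atoms, $\nu(\Lambda^{-1}([0,1)))=1$, so you get $c(1)\ge C$ directly, without restricting to $\Lambda^{-1}([0,x_n))$. Your ``main obstacle'' disappears.

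\textbf{Your shortcuts around this estimate fail.} The fixed-point equation only gives $c(1)\in\{0,C\}$, and likewise $\Cap_\ast(\partial T_\ast)\in\{0,C\}$. Appealing to $c(1/2)>0$ is circular: $c(1/2)=\Cap_\ast(\tau_0(\Lambda^{-1}([0,1))))$ is positive exactly when $c(1)$ is. Some lower bound like the one above is indispensable.

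\textbf{The subadditivity alternative for strict monotonicity is invalid.} Subadditivity bounds $c(y)$ from above. Positive capacity of $\Lambda^{-1}([x,y))$ does not by itself force $c(y)>c(x)$. Keep your main route: $f$ is increasing, and for every dyadic interval $f(g_w(1))-f(g_w(0))=F_w(1)-F_w(0)>0$. The maps are visibly strictly increasing:
\[
F_0(x)=\bigl(v/(\lambda+C^{p'-1}v)\bigr)^{p-1}\ \text{with}\ v=x^{p'-1},\qquad F_1(x)=\bigl(v/(1+C^{p'-1}v)\bigr)^{p-1}\ \text{with}\ v=(\lambda^{1-p}+\mu^{1-p}x)^{p'-1}.
\]
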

This is what potential theory tells us without effort about the function $f$ in the nonlinear case. The problem is to study the regularity of \(f\).

\end{document}